\theoremstyle{nonumberplain}
\newtheorem{proof}{Proof.}
\def\bc{\begin{center}}
\def\ec{\end{center}}
\def\bel{\begin{equation}}
\def\enl{\end{equation}}
\def\be{\begin{eqnarray*}}
\def\en{\end{eqnarray*}}
\def\i{{\bf i}}
\def\j{{\bf j}}
\def\k{{\bf k}}
\newtheorem{Theorem}{Theorem}[section]
 \newtheorem{Lem}{Lemma}[section]
\newtheorem{Cor}{Corollary}[section]
\newtheorem{Pro}{Proposition}[section]
\newtheorem{Def}{Definition}[section]
\newtheorem{Rem}{Remark}[section]
\newtheorem{Exam}{Example}[section]
\def\R{{\mathbb{R}}}
\def\C{{\mathbb{C}}}
\def\H{{\mathbb{H}}}
 \def\cF{{\cal F}}
\def\F{{\mathcal {F}}}
\def\L{{\mathcal {L}}}
\begin{document}
\begin{frontmatter}

\title{Hypercomplex Signal Energy Concentration
 in the Spatial and Quaternionic Linear Canonical Frequency Domains}

\author[firstaddress]{Cuiming~Zou}

\author[mymainaddress]{Kit Ian Kou\corref{mycorrespondingauthor}}
\cortext[mycorrespondingauthor]{Corresponding author}
\ead{kikou@umac.mo}

\address[firstaddress]{Department of Mathematics, Faculty of Science and Technology,
University of Macau, Taipa, Macao, China. Email: zoucuiming2006@163.com}
\address[mymainaddress]{Department of Mathematics, Faculty of Science and Technology,
University of Macau, Taipa, Macao, China.}

\begin{abstract}
Quaternionic Linear Canonical Transforms (QLCTs) are a family of integral transforms,
which generalized the quaternionic Fourier transform and quaternionic fractional Fourier transform.
In this paper, we extend the energy concentration problem for 2D hypercomplex signals (especially quaternionic signals).
The most energy concentrated signals both in 2D spatial and quaternionic linear canonical frequency domains simultaneously
are recently recognized to be the quaternionic prolate spheroidal wave functions (QPSWFs).
The improved definitions of QPSWFs are studied which gave reasonable properties.
The purpose of this paper is to understand the measurements of energy concentration
in the 2D spatial and quaternionic linear canonical  frequency domains.
Examples of  energy concentrated ratios between the truncated Gaussian function
and QPSWFs intuitively illustrate that QPSWFs are more energy concentrated signals.
\end{abstract}

\begin{keyword}
Quaternionic linear canonical transforms \sep energy concentration \sep quaternionic Fourier transform \sep
quaternionic prolate spheroidal wave functions.
%\MSC[2010] 00-01 \sep 99-00
\end{keyword}
%% keywords here, in the form: keyword \sep keyword
%% PACS codes here, in the form: \PACS code \sep code
%% MSC codes here, in the form: \MSC code \sep code
%% or \MSC[2008] code \sep code (2000 is the default)
\end{frontmatter}

%-----------------------------------------------------------------------------------------------------------------------------
\section{Introduction}
\label{S1}

The energy concentration problem in the time-frequency domain plays a crucial role in signal processing.
The foundation of this problem comes from 1960s the research group of bell labs \cite{HL2011}.
The problem states that for any given signal $f$ with its Fourier transform (FT)
\begin{eqnarray}\label{Eq.FT}
\F(f)(\omega):= \frac{1}{\sqrt{2\pi}}\int_{-\infty}^\infty f(t)e^{-\mathbf{i}\omega t}dt,
\end{eqnarray}
the energy ratios of the duration and bandwidth limiting of the signal $f$, i.e.,
 $\alpha_f:=\frac{\int_{-\tau}^\tau |f(t)|^2dt}{\int_{-\infty}^\infty |f(t)|^2dt}$ and
$\beta_f:=\frac{\int_{-\sigma}^\sigma |\F(f)(\omega)|^2d\omega}{\int_{-\infty}^\infty |\F(f)(\omega)|^2d\omega}$
of $f(t)$ both in fixed time $[-\tau,\tau]$ and frequency $[-\sigma,\sigma]$ domains, satisfy the following inequality
\begin{eqnarray}\label{Eq.bound}
\arccos\alpha_f+\arccos\beta_f\geq\arccos\sqrt{\lambda_0}.
\end{eqnarray}
%Here, the energy ratios of the duration and bandwidth limiting of the signal $f$ are defined by $\alpha_f$ and $\beta_f$, respectively.
Let $E_f:=\int_{-\infty}^\infty |f(t)|^2dt$ be the total energy of $f$.
By the Parseval theorem \cite{P1977}, the energy in time and frequency domains are equal, i.e., $E_f=E_{\F(f)}$.
Without loss of generality, we consider the unit energy signals throughout this paper, i.e., $E_f=1$.

The important constant $\lambda_0$ in Eq. (\ref{Eq.bound}) is the eigenvalue of the zero order prolate spheroidal wave functions (PSWFs).
The PSWFs are  originally   used to solve the Helmhotz equation in prolate spheroidal coordinates
by means of separation of variables  \cite{F1957, T1999}.
In 1960s, Slepian \emph{et al.} \cite{SP1961,LP1961,LP1962}
 found that PSWFs are solutions for the energy concentration problem of bandlimited signals \cite{P1977}.
Their real-valued PSWFs are solutions of the integral equation
\begin{eqnarray}\label{Eq.1DPSWFs}
\int_{-\tau}^{\tau}f(x)\frac{\sin \sigma (x-y)}{\pi (x-y)}dx=\alpha f(y),
\end{eqnarray}
where $\alpha$ are eigenvalues of PSWFs.
Here $[-\tau,\tau]$ and $[-\sigma,\sigma]$ are the fixed time and frequency domains, respectively.
Important properties of PSWFs are given in \cite{SP1961, S1964, S1976, K1992, M2013}.
The following properties follow form the general theory of integral equations and are stated without proof.
\begin{enumerate}
  \item
  Eq.(\ref{Eq.1DPSWFs}) has  solutions only for real, positive values eigenvalues $\alpha_n$.
These values is a monotonically decreasing sequence,
$ 1>\alpha_0>\alpha_1>...>\alpha_n>...> 0$, such that
$\lim_{n\rightarrow \infty}\alpha_n=0$.
  \item
  To each $\alpha_n$  there corresponds only one eigenfunction $\psi_{n}(x)$ with a constant factor.
The functions $\{\psi_{n}(x)\}_{n=0}^{\infty}$ form a real orthonormal set in $ \L^2([-\tau,\tau];\R)$.
  \item
  An arbitrary real $\sigma$-bandlimited function $f(x)$ can be written as a sum
$$f(x)=\Sigma_{n=0}^{\infty}a_n\psi_{n}(x),~~~\textrm{for}~ \textrm{all}~x\in \R,$$
where $a_n:=\int_{\R}f(x)\psi_{n}(x)dx$.
\end{enumerate}
These properties are useful in solving the energy concentration problem and other applications \cite{DP2005, ZWSW2014, WS2003, WS2004}.
Slepian \emph{et al.} \cite{S1964} naturally extended them to
higher dimension and discussed their approximation in some special case in the following years.
After that, the works on this functions are slowly developed until 1980s
a large number of engineering applied this functions to signal processing,
such as bandlimited signals  extrapolation, filter designing, reconstruction  and so on
\cite{LW1980,TP1987,MC2004}.

The PSWFs have received intensive attention in recent years.
There are many efforts to extend this kind of functions to various types of integral transformations.
Pei \emph{et al.} \cite{DP2005, PD2005} generalized PSWFs associated with the finite fractional Fourier transform (FrFT)
and applied to the sampling theory.
Zayed \emph{et al.} \cite{MZ2014, Z2014} generalized PSWFs  not only
associated with the finite FrFT but also associated with the linear canonical transforms (LCTs)
and applied to sampling theory.
Zhao \emph{et al.}  \cite{ZWSW2014, ZRMT2010} discussed the PSWFs associated with LCTs in detail and
presented the maximally concentrated sequence in both time and LCTs-frequency domains.
The wavelets based PSWFs constructed by Walter \emph{et al.} \cite{WS2003, WS2004, WS2005}
have some  desirable properties lacking in other wavelet systems.
Kou \emph{et al.} \cite{MKZ2013} developed the PSWFs with noncommutative structures in Clifford algebra.
They not only generalized the PSWFs in Clifford space (CPSWFs), but also extended the transform to Clifford LCT.
But they just gave some basic properties of this functions and
have not discussed details of the energy relationship for square integrable signals.
In this paper, we consider the energy concentration problem for hypercomplex signals,
especially for quaternionic  signals \cite{S1979,H2014b} associated with quaternionic LCTs (QLCTs) in detail.
The improvement definition of QPSWFs are considered for odd and even quaternionic signals.
The study is a great improvement on the one appeared in \cite{MKZ2013}.

The QLCT is a generalization of the quaternionic FT (QFT) and quaternionic FrFT (QFrFT).
The QFT and QFrFT are widely used for color image processing and
signal analysis in these years \cite{EBW1987, CTM2007, BLC2003, CKL2015}.
Therefore, it has more degrees of freedom than QFT and QFrFT,
the performance will be more advanced in color image processing.

In the present paper, we generalize the 1D PSWFs under the QLCTs to the quaternion space,
which are referred to as quaternionic PSWFs (QPSWFs).
The improved definition of QPSWFs associated with the QLCTs is studied and their some important properties are analyzed.
In order to find the relationship of $(\alpha_{\bm{f}},\beta_{\bm{f}})$ for any square integrable quaternionic signal,
we show that the Parseval theorem and studied the energy concentration problem associated with the QLCTs.
In particularly, we utilize the quaternion-valued functions multiply two special chirp signals
on both sides as a bridge between the QLCTs and the QFTs.
The main goal of the present study is to develop the energy concentration problem associated with QLCTs.
We find that the proposed QPSWFs  are the most energy concentrated quaternionic signals.

The body of the present paper is organized as follows.
In Section \ref{sec.QuaternionAlgebra} and \ref{sec.QLCT}, some basic facts of quaternionic algebra and QLCTs are given.
Moreover, the Parseval identity for quaternionic signals associated with the (two-sided) QLCTs are presented.
In Section \ref{S4}, the improved definition and some properties of QPSWFs associated with QLCTs are discussed.
%In particular, the relationship of  QPSWFs between QLCT and QFT are discussed in detail.
The Section \ref{S5} presents the main results, it includes two parts.
In subsection \ref{S5.1}, we introduce the existence theorem for the maximum energy concentrated
bandlimited function on a fixed spatial domain associated with the QLCTs.
In subsection \ref{S5.2}, we discuss the energy extremal properties in fixed spatial
and QLCTs-frequency domains for any quaternionic signal.  In particular, we give an inequality to present the relationship
of energy ratios for any quaternionic signal, which is analogue to the high dimensional real signals.
Moreover, examples of  energy concentrated ratios between the truncated Gaussian function and
 QPSWFs are presented, which can intuitively illustrate that QPSWFs are the more energy concentrated signals.
Finally, some conclusion are drawn in Section \ref{S6}.

%%%%%%%%%%%%%%%%%%%%%%%%%%%section22222222222222%%%%%%%%%%%%%%%%%%%%%%%%%%%%%%%%%%%%%%%%%%%%%%%%%%
\section{Quaternionic Algebra}
\label{sec.QuaternionAlgebra}
The present section collects some basic facts about quaternions \cite{BDS1982, MGS2014},
which will be needed throughout the paper.

For all what follows, let $\H$ be the {\it Hamiltonian skew field of quaternions}:
\begin{eqnarray}\label{Eq.4partsHnumber}
\H := \{\bm{q}=q_0+{\bf i}q_1+{\bf j}q_2+{\bf k}q_3 \, | \, q_0,q_1,q_2,q_3\in\R\} ,
\end{eqnarray}
which is an associative non-commutative four-dimensional algebra.
The basis elements $\{\textbf{i}, \textbf{j}, \textbf{k}\}$ obey the Hamilton's multiplication rules:
\begin{eqnarray*}
\i^2=\j^2=\k^2=-1; \quad \; \i \j =-\j \i =\k, \; \j \k =-\k \j =\i,
\; \k \i=-\i \k =\j,
\end{eqnarray*}
and the usual component-wise defined addition.
 In this way the quaternionic algebra arises as a natural extension of the complex field $\mathbb{C}$.

The {\it quaternion conjugate} of a quaternion $\bm{q}$ is defined by
\begin{eqnarray*}
\overline{\bm{q}}:=q_0-{\bf i}q_1-{\bf j}q_2-{\bf k}q_3,\quad q_0,q_1,q_2,q_3\in\R.
\end{eqnarray*}
We write
$
{\bf Sc}(\bm{q}):=\frac{1}{2}(\bm{q}+\overline{\bm{q}})=q_0$ and ${\bf Vec} (\bm{q}):=\frac{1}{2}(q-\overline{q})={\bf i}q_1+{\bf j}q_2+{\bf k}q_3,$
which are the {\it scalar} and {\it vector parts} of $\bm{q}$, respectively. This leads to a norm of $\bm{q}\in\H$ defined by
$$
|\bm{q}| := \sqrt{\bm{q}\overline{\bm{q}}} = \sqrt{\overline{\bm{q}}\bm{q}} = (q_0^2+q_1^2+q_2^2+q_3^2)^{\frac{1}{2}}.
$$
Then we have
$
\overline{\bm{ p}\bm{q}}=\overline{\bm{q}}\cdot\overline{\bm{ p}}$, $|\bm{q}|=|\overline{\bm{q}}|$,
$|\bm{ pq}|=|\bm{ p}||\bm{q}|$, for any $\bm{ p}, \bm{q} \in\H$.
By (\ref{Eq.4partsHnumber}), a quaternion-valued function or, briefly,
an $\H$-valued function $\bm{f}:\R^2\to\H$ can be expressed in the following form:
$$
\bm{f}(x, y)=f_0(x, y)+{\bf i}f_1(x, y)+{\bf j}f_2(x, y)+{\bf k}f_3(x, y),\,
$$
where $f_i :\R^2 \rightarrow \R$ $(i=0, 1,2,3)$.
For convenience's sake, in the considerations to follow we will rewrite $\bm{f}$ in the following symmetric form \cite{H2007}:
\begin{eqnarray}\label{Eq.4partsHfunction}
\bm{f}(x, y)=f_0(x, y)+{\bf i}f_1(x, y)+f_2(x, y){\bf j}+{\bf i}f_3(x, y){\bf j}.
\end{eqnarray}
Properties (like integrability, continuity or differentiability) that are ascribed to $\bm{f}$ have to be fulfilled by all components $f_i$ $(i=0,1,2,3)$.

In order to state our results, we shall need some further notations.
The linear spaces $\mathcal{L}^p(\R^2;\H)$ ($1 \leq p <\infty$)
 consist of all \emph{$\H$-valued functions} in $\R^2$ under left multiplication
 by quaternions, whose $p$-th power is Lebesgue integrable in $\R^2$:
\begin{eqnarray*}
\mathcal{L}^p(\R^2;\H) := \left\{\bm{f} \; \Big| \; \bm{f}: \R^2 \to \H, \;
\|\bm{f}\|_{\mathcal{L}^p(\R^2;\H)} := \left( \int_{\R^2} |\bm{f}(x, y)|^p dxdy\right)^{1/p} < \infty \right\}.
\end{eqnarray*}
In this work, the {\it left} quaternionic inner product of $\bm{f}, \bm{ g} \in \mathcal{L}^2(\R^2; \H)$ is defined by
\begin{eqnarray} \label{Eq.Hinnerproduct}
<\bm{f},\bm{ g}>_{\mathcal{L}^2(\R^2;\H)} \, := \int_{\R^2}\bm{f}(x, y)\overline{\bm{ g}(x, y)}dxdy.
\end{eqnarray}
The reader should note that the norm induced by the inner product (\ref{Eq.Hinnerproduct}),
\begin{eqnarray*}
\|\bm{f}\|^2 =\|\bm{f}\|_{\mathcal{L}^2(\R^2; \H)}^2
:=<\bm{f},\bm{f}>_{\mathcal{L}^2(\R^2;\H)}= \int_{\R^2}|\bm{f}(x, y)|^2 dxdy.
\end{eqnarray*}
coincides with the $\mathcal{L}^2$-norm for $\bm{f}$, considered as a vector-valued function.

The angle between two non-zero functions $\bm{f}, \bm{ g} \in {\mathcal{L}^2(\R^2; \H)}$ is defined by
\begin{eqnarray}\label{Eq.angle}
\arg(\bm{f},\bm{ g}) :=\arccos \left(\frac{{\bf Sc}(< \bm{f},\bm{ g}>)}{\|\bm{f}\| \; \|\bm{ g}\|} \right).
\end{eqnarray}

The superimposed argument is well-defined since, obviously, it holds
\begin{eqnarray*}
|{\bf Sc}(< \bm{f},\bm{ g}>) |\leq |<\bm{f},\bm{ g}>_{\mathcal{L}^2(\Omega; \H)}| \leq
\|\bm{f}\| \; \|\bm{ g}\|.
\end{eqnarray*}

%The extremal values of  this angle will be discussed in Section \ref{S4} in detail.

%-------------------------------------------------------------------------------
\section{ The Quaternionic Linear Canonical Transforms (QLCTs)}
\label{sec.QLCT}

The LCT was first proposed by Moshinsky and Collins \cite{C1970,MQ1971} in the 1970s.  It is a linear integral transform, which includes many special cases, such as the Fourier transform (FT), the FrFT, the Fresnel transform, the Lorentz transform and scaling operations. In a way, the LCT has more degrees of freedom and is more flexible than the FT and the FrFT, but with similar  computational costs  as the conventional FT. Due to the mentioned advantages, it is  of natural interest to extend the LCT to a quaternionic algebra framework. These extensions lead to the {\it Quaternionic Linear Canonical Transforms} (QLCTs). Due to the non-commutative property of multiplication of quaternions, there are different types of QLCTs. As explained in more detail below, we restrict our attention to the {\it two-sided} QLCTs \cite{KOM2016, XKM2016} of 2D quaternionic signals in this paper.
%-------------------------------------------------------------------------------
\subsection{Definition of QLCTs Revisited}

\begin{Def} [\textbf{Two-sided QLCTs}] \label{Def:QLCTs}
Let $A_i=\left(\begin{array}{ll} a_i & b_i\\c_i &
d_i\end{array}\right)\in \R^{2 \times 2}$ be a matrix parameter such that $\det (A_i)=1,$
for $i=1,2.$ The two-sided QLCTs of signals $\bm{f} \in
\mathcal{L}^1 \bigcap \mathcal{L}^2(\R^2;\H)$ are given by
\begin{eqnarray}\label{Eq.2sideQLCTs}
\mathcal{L} (\bm{f}) (u,v ):= \int_{\R^2} K^{\i}_{A_1}(x,u)
\bm{f}(x, y) K^{\j}_{A_2}(y,v)dxdy,
\end{eqnarray}
where the kernel functions are formulated by
\begin{eqnarray}\label{Eq.kernelQLCTs1}
K^{\i}_{A_1}(x,u):=\left\{ \begin{array}{ll} {1 \over \sqrt{\i 2 \pi b_1}} e^{\i
\left({a_1 \over 2 b_1} x^2-{1 \over b_1} xu +{d_1 \over 2
b_1} u^2 \right) }, & {\rm for } \;\, b_1 \not=0,\\[1.0ex]
\sqrt{d_1} e^{\i ({c_1 d_1 \over 2})
u^2}, & {\rm for } \;\, b_1 =0,\end{array}\right.
\end{eqnarray}
and
\begin{eqnarray}\label{Eq.kernelQLCTs2}
K^{\j}_{A_2}(y,v):=\left\{ \begin{array}{ll} {1 \over \sqrt{\j 2 \pi b_2}} e^{\j
\left({a_2 \over 2 b_2} y^2-{1 \over b_2} yv +{d_2 \over 2
b_2} v^2 \right) }, &  {\rm for } \;\, b_2 \not=0,\\[1.0ex]
\sqrt{d_2} e^{\j ({c_2 d_2 \over 2})
v^2}, & {\rm for } \;\, b_2 =0.\end{array}\right.
\end{eqnarray}
\end{Def}

%==========================
It is significant to note that when $A_1=A_2=\left(\begin{array}{ll} 0 & 1\\-1 &
0\end{array}\right)$, the QLCT of $\bm{f}$ reduces to ${1 \over \sqrt{\i 2 \pi}} \cF (\bm{f}) (u,v) {1 \over \sqrt{\j 2 \pi}}$, where
\begin{eqnarray}\label{Eq.2sideQFT}
\cF (\bm{f}) (u,v )
:= \int_{\R^2} e^{-\i xu} \bm{f}(x, y) e^{-\j yv}
dxdy
\end{eqnarray}is the two-sided QFT of $\bm{f}$.
Note that when $b_i=0$ $(i=1,2)$, the QLCT of a signal is essentially
a chirp multiplication and is of no particular interest for our objective interests.
Without loss of generality, we set
$b_i >0$ $(i=1,2)$ throughout the paper.

%Remark 2---------------------------------------------------------------
\begin{Rem}
 Let $b_1, b_2 \not=0$. Using the Euler formula for the quaternionic linear canonical kernel
we can rewrite Eq. (\ref{Eq.2sideQLCTs}) in the following form:
\begin{eqnarray*}
\nonumber \mathcal{L} (f) (u,v)
= {-\i \sqrt{\i} \over 2 \pi \sqrt{b_1 b_2}}
\left(P_1+ \i P_2+P_3 \j +\i  P_4\j \right) (-\j \sqrt{\j}),
\end{eqnarray*}
where
\begin{eqnarray*}
P_1&:=&\int_{\R^2}  f(x, y)\cos \left({a_1 \over 2b_1} x^2-{1 \over b_1}xu +{d_1 \over 2b_1} u^2 \right)
\cos \left({a_2 \over 2b_2} y^2-{1 \over b_2} yv +{d_2 \over 2 b_2} v^2 \right)dxdy,\\
P_2&:=& \int_{\R^2} f(x, y)\sin \left({a_1 \over 2b_1} x^2-{1 \over b_1}xu +{d_1 \over 2b_1} u^2 \right)
\cos \left({a_2 \over 2b_2} y^2-{1 \over b_2} yv +{d_2 \over 2 b_2} v^2 \right)dxdy, \\
P_3&:=& \int_{\R^2} f(x, y)\cos \left({a_1 \over 2b_1} x^2-{1 \over b_1}xu +{d_1 \over 2b_1} u^2 \right)
\sin \left({a_2 \over 2b_2} y^2-{1 \over b_2} yv +{d_2 \over 2 b_2} v^2 \right) dxdy, \\
P_4&:=& \int_{\R^2} f(x, y) \sin \left({a_1 \over 2b_1} x^2-{1 \over b_1}xu +{d_1 \over 2b_1} u^2 \right)
  \sin \left({a_2 \over 2b_2} y^2-{1 \over b_2} yv +{d_2 \over 2 b_2} v^2 \right)  dxdy.
\end{eqnarray*}
 The above equation clearly shows how the QLCTs separate
{\bf real } signals $f(x,y)$ into four quaternionic components, i.e., the even-even,
odd-even, even-odd and odd-odd components of $f(x,y)$.
\end{Rem}

%--------------------
From Eq. (\ref{Eq.2sideQLCTs}) if $\bm{f} \in \mathcal{L}^1 \bigcap \mathcal{L}^2(\R^2;\H)$,
then the two-sided QLCTs $\mathcal{L}(\bm{f})(u,v)$ has a symmetric representation
\begin{eqnarray*}
\mathcal{L} (\bm{f})(u,v) = \mathcal{L} (f_0 )(u,v)+ \mathcal{L} (f_1 )(u,v)\mathbf{i} +
\mathcal{L} ( f_2 )(u,v) \mathbf{j}+ \mathbf{i}\mathcal{L} (f_3 )(u,v)\mathbf{j},
\end{eqnarray*}
where $\mathcal{L} (f_i )$ $(i=0,1,2,3)$ are the QLCTs of $f_i$ and they are $\H$-valued functions.
%%%%%%%%%%%%%%%%%%%%%%%%%%%%%%%%%%%%%%%%%%%%%%%%%%%%%%%%%%%%%%%%%%%%%%%%%%%%%%%%%

Under suitable conditions, the inversion of two-sided quaternionic linear canonical transforms of $\bm{f}(u,v)$ can be defined as follows.

%---------------Definition inverse QLCTs--------------------------------------------------
\begin{Def} [\textbf{Inversion QLCTs}]
Suppose that $\bm{f} \in \mathcal{L}^1\bigcap \mathcal{L}^2 (\R^2, \H)$. Then the inversion of two-sided QLCTs of
$\bm{f}(u,v)$ are defined by
\begin{eqnarray}
\mathcal{L}^{-1}(\bm{f}) (x, y)
:=\int_{\R^2} K^{\i}_{A_1^{-1}}(x,u) \bm{f}(u,v) K^{\j}_{A_2^{-1}}(y,v) du dv,
\end{eqnarray}
where $A_i^{-1}=\left(\begin{array}{ll} d_i & -b_i\\ -c_i &
a_i\end{array}\right)$ and $\det (A_i^{-1})=1$ for $i=1,~2$.
\end{Def}

The following subsection describes the important relationship between QLCTs and QFT,
which will be used to establish the main results in Section \ref{S5}.
%-----------------------------------------------------------------------------------------
\subsection{The Relation Between QLCTs and QFT}

Note that the QLCTs of $\bm{f}$
multiple the chirp signals $2\pi \sqrt{b_1\mathbf{i}} e^{\mathbf{-i}\frac{d_1}{2b_1}u^2}$ on the left and $e^{\mathbf{-j}\frac{d_2}{2b_2}v^2}\sqrt{b_2\mathbf{j}}$ on the right
can be regarded as the QFT  on the scale domain.
Since
\begin{eqnarray}\label{Eq.LCTFTrelation}
&& 2\pi \sqrt{b_1\mathbf{i}} e^{\mathbf{-i}\frac{d_1}{2b_1}u^2}
\L(\bm{f}) (u,v)e^{\mathbf{-j}\frac{d_2}{2b_2}v^2}\sqrt{b_2\mathbf{j}}\\ \nonumber
&=&\int_{\R^2}
e^{\mathbf{-i} \frac{1}{b_1}xu}
\left(e^{\mathbf{i} \frac{a_1}{2b_1}x^2}\bm{f}(x,y)e^{\mathbf{j} \frac{a_2}{2b_2}y^2}\right)
e^{\mathbf{-j} \frac{1}{b_2}yv}
dxdy =\F(\tilde{\bm{f}}) \left(\frac{u}{b_1},\frac{v}{b_2}\right),
\end{eqnarray}
where $\tilde{\bm{f}}(x,y):=e^{\mathbf{i} \frac{a_1}{2b_1}x^2}\bm{f}(x,y)e^{\mathbf{j} \frac{a_2}{2b_2}y^2}$
is related to the parameter matrix $A_i,~i=1,2$ in Eq. (\ref{Eq.2sideQLCTs}).

\begin{Lem} [\textbf{Relation Between QLCT and QFT}]
Let $A_i=\left(\begin{array}{ll} a_i & b_i\\c_i &
d_i\end{array}\right)\in \R^{2 \times 2}$ be a real matrix parameter such that $\det (A_i)=1$
for $i=1,2.$ The relationship between two-sided QLCTs and QFTs of $\bm{f}\in
\mathcal{L}^1 \bigcap \mathcal{L}^2(\R^2;\H)$ are given by
\begin{eqnarray}\label{Eq.FLrelation}
\F (\tilde{\bm{f}}) \left(\frac{u}{b_1},\frac{v}{b_2}\right)
=2\pi \sqrt{b_1\mathbf{i}} e^{\mathbf{-i}\frac{d_1}{2b_1}u^2}
\L(\bm{f})(u,v)e^{\mathbf{-j}\frac{d_2}{2b_2}v^2}\sqrt{b_2\mathbf{j}},
\end{eqnarray} where
$\tilde{\bm{f}}(x,y)=e^{\mathbf{i} \frac{a_1}{2b_1}x^2}\bm{f}(x,y)e^{\mathbf{j} \frac{a_2}{2b_2}y^2}$.
\end{Lem}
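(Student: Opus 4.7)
The plan is to verify the identity by a direct kernel calculation, since the content of the lemma is essentially the displayed computation (\ref{Eq.LCTFTrelation}) immediately preceding the statement. What needs to be done carefully is the bookkeeping of non-commutativity: the chirps are $\i$- or $\j$-valued, whereas $\bm{f}$ is a general quaternion, so I must make sure that at each rearrangement the factor being moved commutes with what it crosses.

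First, I would substitute the definition (\ref{Eq.2sideQLCTs}) of $\L(\bm{f})(u,v)$ into the right-hand side of (\ref{Eq.FLrelation}) and bring the $u,v$-dependent chirps inside the integral, so the entire expression becomes
\begin{eqnarray*}
\int_{\R^2} \Bigl( 2\pi\sqrt{b_1\i}\, e^{-\i\frac{d_1}{2b_1}u^2}\,K^{\i}_{A_1}(x,u)\Bigr)\,\bm{f}(x,y)\,\Bigl(K^{\j}_{A_2}(y,v)\,e^{-\j\frac{d_2}{2b_2}v^2}\sqrt{b_2\j}\Bigr)\,dx\,dy.
\end{eqnarray*}
Second, since every factor inside the first parenthesis lies in the subalgebra $\R\oplus\R\i$, the three $\i$-exponents add, so the kernel factors as
\begin{eqnarray*}
K^{\i}_{A_1}(x,u) \;=\; e^{\i\frac{d_1}{2b_1}u^2}\cdot \tfrac{1}{\sqrt{\i 2\pi b_1}}\,e^{-\i\frac{xu}{b_1}}\cdot e^{\i\frac{a_1}{2b_1}x^2},
\end{eqnarray*}
and analogously $K^{\j}_{A_2}(y,v)=e^{\j\frac{a_2}{2b_2}y^2}\cdot \tfrac{1}{\sqrt{\j 2\pi b_2}}e^{-\j\frac{yv}{b_2}}\cdot e^{\j\frac{d_2}{2b_2}v^2}$. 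Then $e^{-\i\frac{d_1}{2b_1}u^2}$ kills $e^{\i\frac{d_1}{2b_1}u^2}$ on the left and $e^{-\j\frac{d_2}{2b_2}v^2}$ kills $e^{\j\frac{d_2}{2b_2}v^2}$ on the right.

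Third, I would collect the scalar constants. A short computation gives
$2\pi\sqrt{b_1\i}\cdot \tfrac{1}{\sqrt{\i 2\pi b_1}}=\sqrt{2\pi}$ and $\tfrac{1}{\sqrt{\j 2\pi b_2}}\cdot\sqrt{b_2\j}=\tfrac{1}{\sqrt{2\pi}}$ (valid because $b_1,b_2>0$), so the total scalar factor is $1$. What remains adjacent to $\bm{f}(x,y)$ is $e^{\i\frac{a_1}{2b_1}x^2}$ on the left and $e^{\j\frac{a_2}{2b_2}y^2}$ on the right; by definition of $\tilde{\bm{f}}$, their product with $\bm{f}$ is exactly $\tilde{\bm{f}}(x,y)$. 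The integrand reduces to $e^{-\i\frac{xu}{b_1}}\,\tilde{\bm{f}}(x,y)\,e^{-\j\frac{yv}{b_2}}$, whose integral is, by (\ref{Eq.2sideQFT}), precisely $\cF(\tilde{\bm{f}})(u/b_1,v/b_2)$.

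The only real obstacle is making sure that at no point an $\i$-exponent is commuted past a $\j$- or $\k$-component of $\bm{f}$, and similarly for $\j$-exponents. The factoring above is chosen so that the chirps $e^{\i\frac{a_1}{2b_1}x^2}$ and $e^{\j\frac{a_2}{2b_2}y^2}$ sit immediately next to $\bm{f}$ and are never moved across it; everything else that is manipulated stays within a single commuting $\i$- or $\j$-subalgebra, so the rearrangements are legitimate.
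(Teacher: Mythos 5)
Your proposal is correct and takes essentially the same route as the paper: the paper's own justification is exactly the direct kernel computation displayed in Eq.~(\ref{Eq.LCTFTrelation}) immediately before the lemma, namely substituting the QLCT kernel, cancelling the $d_i$-chirps against the multiplied chirps within the commuting $\i$- and $\j$-subalgebras, and recognising the remaining integral as $\F(\tilde{\bm{f}})(u/b_1,v/b_2)$. Your version merely makes the scalar bookkeeping ($2\pi\sqrt{b_1\i}\cdot(\sqrt{\i 2\pi b_1})^{-1}=\sqrt{2\pi}$, etc.) and the non-commutativity checks explicit, which the paper leaves implicit.
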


%--------------------------------------------------------------------------------------
\subsection{Energy Theorem Associated with QLCTs}
This subsection describes energy theorem of two-sided QLCTs  \cite{CK2016},
which will be applied to derive the extremal properties of QLCTs in Section \ref{S5}.

\begin{Theorem} [\textbf{Energy Theorem of the QLCTs}] \label{Th.PlancherelTheorem}
Any 2D $\H$-valued function  $\bm{f}\in L^2(\R^2, \H)$ and
its QLCT $\mathcal{L} (\bm{f})$ are  related by the Parseval identity
\begin{eqnarray}\label{Eq.ParsevalQLCTs}
\| \bm{f}\|^2= \|\mathcal{L} (\bm{f})\|^2.
\end{eqnarray}
\end{Theorem}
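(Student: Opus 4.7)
The plan is to reduce the QLCT Parseval identity to the Parseval identity for the two-sided QFT, using the bridge formula (\ref{Eq.FLrelation}) established in the preceding lemma. First I would solve (\ref{Eq.FLrelation}) for $\mathcal{L}(\bm{f})(u,v)$, obtaining an expression in which $\mathcal{F}(\tilde{\bm{f}})(u/b_1, v/b_2)$ is sandwiched between the unit-modulus chirp factors $e^{\mathbf{i}d_1 u^2/2b_1}$ and $e^{\mathbf{j}d_2 v^2/2b_2}$ together with the scalar factors $(2\pi\sqrt{b_1\mathbf{i}})^{-1}$ on the left and $(\sqrt{b_2\mathbf{j}})^{-1}$ on the right.

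Next I would take pointwise absolute values. The multiplicativity $|\bm{p}\bm{q}|=|\bm{p}||\bm{q}|$ on $\H$ makes this step routine despite the non-commutativity: each chirp has modulus one, and $|\sqrt{b_i\mathbf{i}}|=|\sqrt{b_i\mathbf{j}}|=\sqrt{b_i}$. This yields the clean pointwise identity
\begin{eqnarray*}
|\mathcal{L}(\bm{f})(u,v)|^2 \;=\; \frac{1}{4\pi^2 b_1 b_2}\left|\mathcal{F}(\tilde{\bm{f}})\left(\frac{u}{b_1},\frac{v}{b_2}\right)\right|^2.
\end{eqnarray*}
Integrating over $(u,v)\in\R^2$ and applying the substitution $u'=u/b_1$, $v'=v/b_2$ (valid since $b_1,b_2>0$), the Jacobian $b_1 b_2$ cancels the factor in the denominator, reducing the claim to $\|\mathcal{L}(\bm{f})\|^2=(2\pi)^{-2}\|\mathcal{F}(\tilde{\bm{f}})\|^2$.

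Invoking Parseval's identity for the (unnormalized) two-sided QFT, which yields $\|\mathcal{F}(\bm{g})\|^2=4\pi^2\|\bm{g}\|^2$, simplifies the right-hand side to $\|\tilde{\bm{f}}\|^2$. Finally, since $\tilde{\bm{f}}(x,y)=e^{\mathbf{i} a_1 x^2/2b_1}\bm{f}(x,y)e^{\mathbf{j} a_2 y^2/2b_2}$ differs from $\bm{f}(x,y)$ only by unit-modulus chirp factors, one has $|\tilde{\bm{f}}(x,y)|=|\bm{f}(x,y)|$ pointwise, so $\|\tilde{\bm{f}}\|^2=\|\bm{f}\|^2$, completing the proof.

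The main obstacle is essentially bookkeeping: because $\H$ is non-commutative, one must track the left/right positions of the chirp and square-root factors carefully and resist the temptation to commute them past $\bm{f}$ or $\mathcal{F}(\tilde{\bm{f}})$. Once one commits to extracting moduli only after all substitutions have been made, the quaternionic structure causes no genuine difficulty. The only substantive external input is the Parseval identity for the two-sided QFT itself, whose proof is standard: decomposing $\bm{f}$ into its four real-valued components via (\ref{Eq.4partsHfunction}) and applying the scalar 2D Fourier Plancherel theorem componentwise gives the required factor of $(2\pi)^2$.
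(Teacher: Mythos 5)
Your argument is correct, but it is not the route the paper takes. The paper proves the identity directly: it writes $\|\mathcal{L}(\bm{f})\|^2$ as a double integral, passes to the scalar part, uses the cyclicity ${\bf Sc}(\bm{qp})={\bf Sc}(\bm{pq})$ together with $\overline{K^{\i}_{A_1}}=K^{\i}_{A_1^{-1}}$ and $\overline{K^{\j}_{A_2}}=K^{\j}_{A_2^{-1}}$, and then recognizes the inner integral as $\mathcal{L}^{-1}(\mathcal{L}(\bm{f}))=\bm{f}$. You instead take pointwise moduli in the bridge identity (\ref{Eq.FLrelation}), exploit $|\bm{pq}|=|\bm{p}||\bm{q}|$ and the unit modulus of the chirps and of $\sqrt{\i}$, $\sqrt{\j}$, rescale by $(u,v)\mapsto(u/b_1,v/b_2)$, and reduce everything to the Parseval identity for the two-sided QFT. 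Both routes rest on an unproved standard input: the paper's on the inversion formula for the QLCT, yours on the QFT Plancherel theorem. Your route is shorter and makes the structural point (QLCT $=$ chirp-modulated, rescaled QFT) explicit; the paper's is self-contained within its own definitions and, incidentally, yields the QFT case as the special instance $A_1=A_2=\bigl(\begin{smallmatrix}0&1\\-1&0\end{smallmatrix}\bigr)$ rather than consuming it as an input. Note also that your normalization $\|\F(\bm{g})\|^2=4\pi^2\|\bm{g}\|^2$ for the unnormalized kernel in (\ref{Eq.2sideQFT}) is the consistent one; the paper's Corollary as printed drops this factor.

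One small caveat on your final remark: proving the two-sided QFT Parseval "componentwise" is not quite as routine as you suggest. Writing $\bm{f}=f_0+\i f_1+f_2\j+\i f_3\j$ does give $\F(\bm{f})=\F(f_0)+\i\F(f_1)+\F(f_2)\j+\i\F(f_3)\j$, but each $\F(f_i)$ is itself a full quaternion-valued function, so $\|\F(\bm{f})\|^2=\sum_i\|\F(f_i)\|^2$ requires checking that the integrated cross terms ${\bf Sc}\int\F(f_m)\,\overline{e_n\F(f_n)e_n'}$ vanish; this is true but needs the symmetry relations of the QFT of real signals (or, more simply, the same inversion-plus-scalar-part argument the paper uses for the QLCT, applied to the QFT). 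Since this Parseval identity is standard and is the content of the cited reference for Theorem \ref{Th.PlancherelTheorem}'s corollary, this does not invalidate your proof, but the last sentence of your sketch understates the work hidden there.
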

%----------Proof Energy theorems-------------------------
\begin{proof}
 For $\bm{f}\in L^2(\R^2, \H)$, direct computation shows that
\begin{eqnarray*}
||\mathcal{L}(\bm{f})||^2
= \int_{\R^2} \mathcal{L}(\bm{f})(u,v)\overline{\mathcal{L}(\bm{f})(u,v)}dudv
={\bf Sc}\left[\int_{\R^2}\mathcal{L}(\bm{f})(u,v)\overline{\mathcal{L}(\bm{f})(u,v)}dudv \right].
\end{eqnarray*}
Applying the definition of QLCTs, we have
\begin{eqnarray*}
||\mathcal{L}(\bm{f})||^2
&=&{\bf Sc}\left[\int_{\R^2}\left(\int_{\R^2} K^{\i}_{A_1}(x,u)\bm{f}(x, y) K^{\j}_{A_2}(y,v)dxdy\right)\overline{\mathcal{L}(\bm{f})(u,v)}dudv \right]\\
&=&{\bf Sc}\left[\int_{\R^4}K^{\i}_{A_1}(x,u)\bm{f}(x, y) K^{\j}_{A_2}(y,v)\overline{\mathcal{L}(\bm{f})(u,v)}dxdydudv \right]\\
&=&\int_{\R^4}{\bf Sc}\left[K^{\i}_{A_1}(x,u)\bm{f}(x, y) K^{\j}_{A_2}(y,v)\overline{\mathcal{L}(\bm{f})(u,v)}\right]dxdydudv.
\end{eqnarray*}
With ${\bf Sc}(\bm{qp})={\bf Sc}(\bm{pq})$  for any  $\bm{p},\bm{q} \in \H$  and
 $\overline{K^{\i}_{A_1}}=K^{\i}_{A_1^{-1}}$,
$\overline{K^{\j}_{A_2}}=K^{\j}_{A_2^{-1}}$, we have
\begin{eqnarray*}
||\mathcal{L}(\bm{f})||^2
&=&\int_{\R^4}{\bf Sc}\left[\bm{f}(x, y) K^{\j}_{A_2}(y,v)  \overline{\mathcal{L}(\bm{f})(u,v)}
 K^{\i}_{A_1}(x,u)\right] dxdydudv \\
&=&\int_{\R^4}{\bf Sc}\left[\bm{f}(x, y)  \overline{K^{\i}_{A^{-1}_1}(x,u)\mathcal{L}(\bm{f})(u,v) K^{\j}_{A^{-1}_2}(y,v) }
\right]  dxdydudv\\
&=&{\bf Sc}\left[ \int_{\R^2}\bm{f}(x, y)\overline{\int_{\R^2} K^{\i}_{A^{-1}_1}(x,u)\mathcal{L}(\bm{f})(u,v) K^{\j}_{A^{-1}_2}(y,v)dudv }dxdy\right]\\
&=&{\bf Sc}\left[ \int_{\R^2}\bm{f}(x, y)\overline{\bm{f}(x, y)}dxdy\right]
=\int_{\R^2}\bm{f}(x, y)\overline{\bm{f}(x, y)}dxdy=\| \bm{f}\|^2.
\end{eqnarray*}
Hence this completes the proof.
\end{proof}
Theorem \ref{Th.PlancherelTheorem} shows that the energy for an $\H$-valued signal
 in the spatial domain equals to the energy  in the QLCTs-frequency domain.
 The Parseval theorem allows the energy of an $\H$-valued signal to be considered on either the
spatial domain or the QLCTs-frequency domain, and exchange the domains
for convenience computation.

\begin{Cor}
The energy theorem of  $\bm{f}$ and
$\tilde{\bm{f}}(x,y)=e^{\mathbf{i} \frac{a_1}{2b_1}x^2}\bm{f}(x,y)e^{\mathbf{j} \frac{a_2}{2b_2}y^2}$
associated with their QFT is given by
\begin{eqnarray}\label{Eq.ParsevalQFT}
\| \bm{f}\|^2=\| \tilde{\bm{f}}\|^2= \|\mathcal{F} (\bm{f})\|^2.
\end{eqnarray}
\end{Cor}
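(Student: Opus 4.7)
My plan is to establish the two equalities independently. For $\|\bm{f}\|^2=\|\tilde{\bm{f}}\|^2$, I would argue by pointwise preservation of modulus. Because $\tfrac{a_1}{2b_1}x^2$ and $\tfrac{a_2}{2b_2}y^2$ are real for each $(x,y)\in\R^2$, the quaternionic exponentials $e^{\mathbf{i}\frac{a_1}{2b_1}x^2}$ and $e^{\mathbf{j}\frac{a_2}{2b_2}y^2}$ are of the form $\cos\theta+\mathbf{i}\sin\theta$ and $\cos\theta+\mathbf{j}\sin\theta$ respectively, and each has quaternion modulus $1$. Applying the multiplicativity $|\bm{p}\bm{q}|=|\bm{p}|\,|\bm{q}|$ of the norm on $\H$ twice then gives $|\tilde{\bm{f}}(x,y)|=|\bm{f}(x,y)|$ pointwise, and integrating over $\R^2$ yields $\|\tilde{\bm{f}}\|^2=\|\bm{f}\|^2$.

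For the second equality $\|\tilde{\bm{f}}\|^2=\|\mathcal{F}(\bm{f})\|^2$, my strategy is to reduce it to Theorem \ref{Th.PlancherelTheorem}, the Parseval identity for the QLCTs. Specializing the parameter matrices to $a_i=d_i=0$, $b_i=1$, $c_i=-1$ for $i=1,2$ collapses the QLCT kernels to $\tfrac{1}{\sqrt{\mathbf{i}2\pi}}e^{-\mathbf{i}xu}$ and $\tfrac{1}{\sqrt{\mathbf{j}2\pi}}e^{-\mathbf{j}yv}$, so that $\mathcal{L}(\bm{f})(u,v)=\tfrac{1}{\sqrt{\mathbf{i}2\pi}}\mathcal{F}(\bm{f})(u,v)\tfrac{1}{\sqrt{\mathbf{j}2\pi}}$, as already noted in Section \ref{sec.QLCT}. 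Taking $L^2$-norms on both sides, pulling out the constant prefactors using the multiplicativity of the quaternion modulus, and invoking Theorem \ref{Th.PlancherelTheorem} delivers the Plancherel identity for the QFT, which combined with the first equality closes the chain. As an alternative route, I could start from the QLCT--QFT relation (\ref{Eq.FLrelation}), take the pointwise modulus of both sides to eliminate the unit-modulus chirps $e^{-\mathbf{i}\frac{d_1}{2b_1}u^2}$ and $e^{-\mathbf{j}\frac{d_2}{2b_2}v^2}$, integrate, perform the change of variables $(u,v)\mapsto(b_1u,b_2v)$ to absorb the $1/b_1b_2$ Jacobian, and then apply Theorem \ref{Th.PlancherelTheorem}.

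The main obstacle is bookkeeping constants: the $(2\pi)^{-1/2}$ factors in each kernel, the Jacobian $b_1b_2$ introduced by the variable change, and the quaternion moduli of the ``square-root'' factors $\sqrt{\mathbf{i}2\pi b_1}$ and $\sqrt{\mathbf{j}2\pi b_2}$ must all combine correctly to reproduce the claimed identity. No genuinely new analytical ingredient is required; once these cancellations are verified, the corollary follows directly from Theorem \ref{Th.PlancherelTheorem}, the identity $|\tilde{\bm{f}}|=|\bm{f}|$, and the unit-modulus property of the quaternionic exponentials appearing in (\ref{Eq.FLrelation}).
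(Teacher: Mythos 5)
Your approach is the right one, and it is essentially the only sensible derivation: the paper states this corollary without proof, and the intended argument is precisely what you describe --- the first equality from the pointwise unit modulus of the real-argument chirps $e^{\mathbf{i}\frac{a_1}{2b_1}x^2}$, $e^{\mathbf{j}\frac{a_2}{2b_2}y^2}$ together with $|\bm{pq}|=|\bm{p}||\bm{q}|$, and the second by specializing Theorem \ref{Th.PlancherelTheorem} (or using the relation (\ref{Eq.FLrelation}) plus the change of variables $(u,v)\mapsto(b_1u,b_2v)$). One caveat on the step you flag as ``bookkeeping'': with the paper's unnormalized QFT (\ref{Eq.2sideQFT}), the constants do \emph{not} cancel. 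Since $\mathcal{L}(\bm{f})=\tfrac{1}{\sqrt{\mathbf{i}2\pi}}\mathcal{F}(\bm{f})\tfrac{1}{\sqrt{\mathbf{j}2\pi}}$ for the Fourier choice of parameters and $\bigl|\tfrac{1}{\sqrt{\mathbf{i}2\pi}}\bigr|=\bigl|\tfrac{1}{\sqrt{\mathbf{j}2\pi}}\bigr|=(2\pi)^{-1/2}$, Theorem \ref{Th.PlancherelTheorem} yields $\|\mathcal{F}(\bm{f})\|^2=(2\pi)^2\|\bm{f}\|^2$; the alternative route via (\ref{Eq.FLrelation}) gives the same factor after the Jacobian $b_1b_2$ cancels. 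So your method is sound but would (correctly) produce the identity only up to $(2\pi)^2$; the discrepancy lies in the paper's statement, which implicitly assumes a $\tfrac{1}{2\pi}$-normalized QFT, not in your argument. You should state explicitly which normalization you adopt rather than asserting that the constants ``reproduce the claimed identity.''
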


%%%%%%%%%%%%%%%%%%%%%%%%%%%%section33333333333333%%%%%%%%%%%%%%%%%%%%%%%%%%%%%%%%%%%%%%%%%%%%%%%%%%
\section{The Quaternionic Prolate Spheroidal Wave Functions}
\label{S4}

In the following, we first explicitly present the definition of PSWFs associated with QLCTs.

\subsection{Definitions of QPSWFs}
Consider the 1D PSWFs \cite{P1977, SP1961, S1964, MKZ2013},
let us extend the PSWFs to the quaternionic space associated with QLCTs.

\begin{Def} [\textbf{QPSWFs}]
The solutions of the following integral equation  in $ \L^1(\R^2;\H)$
\begin{eqnarray}\label{Eq.QPSWF}
\lambda_n \mathbf{i}^{n-\frac{1}{2}}\bm{\psi}_{n}(u,v)\mathbf{j}^{n-\frac{1}{2}}
:= \int_{\bm{\tau}}
K^{\mathbf{i}}_{A'_1}(x,u)\bm{\psi}_{n}(x,y)K^{\mathbf{j}}_{A'_2}(y,v)
dxdy,
\end{eqnarray}
are called the quaternionic prolate spheroidal wave functions (QPSWFs)
$\{\bm{\psi}_{n}(x,y)\}_{n=0}^\infty$ associated with QLCTs.
Here,  the complex valued $\lambda_n$ are the eigenvalues corresponding to the eigenfunctions $\bm{\psi}_{n}(x,y)$.
The real parameter matrix $A'_i:=\left(\begin{array}{cc} ca_i& b_i \\ cc_i&cd_i\end{array}\right)$
with $a_id_i-b_ic_i=1$, $b_i\neq 0$, for $i=1, 2$.
The real constant $c$ is a ratio about the frequency domain
 $\bm{\sigma}:=[-\sigma,\sigma]\times[-\sigma,\sigma]$
and the spatial domain $\bm{\tau}:=[-\tau,\tau]\times[-\tau,\tau]$,
where $c:=\frac{\sigma}{\tau},~0<c<\infty$.
Eq. (\ref{Eq.QPSWF}) is named the {\it finite QLCTs form of QPSWFs}.
\end{Def}

Note that for simplicity of presentation, we write
$\int_{-\tau}^{\tau} \int_{-\tau}^{\tau}=\int_{\bm{\tau}}$
and  $\int_{-\sigma}^{\sigma} \int_{-\sigma}^{\sigma}=\int_{\bm{\sigma}}$.

\begin{Rem}
The solutions of this integral equation in Eq. (\ref{Eq.QPSWF})
are well established in some special cases.

\begin{description}
\item[(i)] In the square region $\bm{\tau}=[-\tau,\tau]\times[-\tau,\tau]$,
if QLCTs are degenerated to 2D Fourier transform (FT),
then QPSWFs becomes the 2D real PSWFs, which is given by
\begin{eqnarray*}
\lambda_n  \psi_{n}(u,v) = \int_{\bm{\tau}}e^{\mathbf{i}cxu} \psi_{n}(x,y)e^{\mathbf{i}cyv}dxdy.
\end{eqnarray*}
Here, if $\psi_{n}(x,y)$ is separable, i.e., $\psi_{n}(x,y)=\psi_{n}(x)\psi_{n}(y)$, then
the 2D PSWFs can be regarded as the product of two 1D PSWFs.
To aid the reader, see \cite{SP1961} for more complete accounts of this subject.

\item[(ii)] In  a unit disk, the QLCTs are degenerated to 2D FT, then the QPSWFs between the circular PSWFs \cite{S1979}
\begin{eqnarray*}
\lambda_n \psi_{n}(u,v)=\int_{x^2+y^2\leq 1}
e^{\mathbf{i}(cxu+cyv)}\psi_{n}(x,y)dxdy.
\end{eqnarray*}
\end{description}
\end{Rem}
\begin{Rem}
We call the right-hand side of Eq. (\ref{Eq.QPSWF}) is the finite QLCTs. However, $\det(A'_i)=1,~i=1,2$ only for the $c=1$.
There is a scale factor $c$ added to the parameter matrix, which is different from the definition of QLCTs.
\end{Rem}

\subsection{Properties of QPSWFs}

Some important properties of QPSWFs will be considered in this part,
which are crucial in solving the energy concentration problem.

\begin{Pro}[\textbf{Low-pass Filtering Form in $\bm{\tau}$}]
Let $\bm{\psi}_{n}(x,y)\in\L^1(\R^2;\H)$ be the QPSWFs associated with their QLCTs and
$ \tilde{\bm{\psi}}(x,y):=e^{\mathbf{i} \frac{ca_1}{2b_1}x^2}\bm{\psi}_{n}(x,y)e^{\mathbf{j} \frac{ca_2}{2b_2}y^2}$.
Then $\{\tilde{\bm{\psi}}_n(x,y)\}_{n=0}^\infty$ are solutions of the following integral equation
\begin{eqnarray}\label{Eq.lowpass}
\mu_n \tilde{\bm{\psi}}_{n}(u,v)=
\int_{\bm{\tau}}
\tilde{\bm{\psi}}_{n}(x,y)
\frac{\sin \sigma (x-u)}{\pi (x-u)}
\frac{\sin \sigma (y-v)}{\pi (y-v)}
dxdy,
\end{eqnarray}
where $\mu_n:=c^4b_1b_2\lambda_n^2$  for  $n=0,1,2\cdots$ are the eigenvalues corresponding to $\tilde{\bm{\psi}}_{n}(x,y)$
and $a_id_i-b_ic_i=1$, $b_i\neq 0$, for $i=1, 2$, and $c:=\frac{\sigma}{\tau}, 0<c<\infty$.
Eq. (\ref{Eq.lowpass}) is named the {\it low-pass filtering form of QPSWFs associated with QLCTs}.
\end{Pro}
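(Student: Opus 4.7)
The plan is to reduce the right-hand side of (\ref{Eq.lowpass}) to $\mu_n \tilde{\bm{\psi}}_n(u,v)$ by (i) re-expressing each sinc kernel as a Fourier-type integral over $\bm{\sigma}$, (ii) recognising the resulting inner $\bm{\tau}$-integral as a truncated two-sided QFT of $\tilde{\bm{\psi}}_n$, and (iii) invoking the QLCT--QFT correspondence (\ref{Eq.FLrelation}) together with the defining equation (\ref{Eq.QPSWF}) twice in sequence.

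First I would use the identities $\frac{\sin\sigma(x-u)}{\pi(x-u)}=\frac{1}{2\pi}\int_{-\sigma}^{\sigma}e^{\mathbf{i}\omega_1(x-u)}d\omega_1$ and $\frac{\sin\sigma(y-v)}{\pi(y-v)}=\frac{1}{2\pi}\int_{-\sigma}^{\sigma}e^{\mathbf{j}\omega_2(y-v)}d\omega_2$, which hold because the sine parts of the exponentials are odd in $\omega$ and vanish over the symmetric intervals. Since the product of the two sinc kernels is a real scalar it commutes with the quaternion-valued $\tilde{\bm{\psi}}_n(x,y)$, so after substitution and Fubini the integrand can be repositioned to give $\frac{1}{(2\pi)^2}\int_{\bm{\sigma}}e^{-\mathbf{i}\omega_1 u}\hat{\bm{\Psi}}_n(-\omega_1,-\omega_2)e^{-\mathbf{j}\omega_2 v}d\omega_1 d\omega_2$, where $\hat{\bm{\Psi}}_n:=\mathcal{F}(\tilde{\bm{\psi}}_n\chi_{\bm{\tau}})$ is the two-sided QFT of $\tilde{\bm{\psi}}_n$ extended by zero outside $\bm{\tau}$.

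Next I would apply the QLCT--QFT correspondence (\ref{Eq.FLrelation}), taken with the scaled parameter matrices $A'_i$, to express $\hat{\bm{\Psi}}_n(u/b_1,v/b_2)$ in terms of the finite QLCT of $\bm{\psi}_n$ over $\bm{\tau}$. The defining equation (\ref{Eq.QPSWF}) then replaces this finite QLCT by $\lambda_n\mathbf{i}^{n-1/2}\bm{\psi}_n(u,v)\mathbf{j}^{n-1/2}$, producing a first factor of $\lambda_n$, a constant $2\pi\sqrt{b_1\mathbf{i}}\sqrt{b_2\mathbf{j}}$, and a chirp pair $e^{-\mathbf{i}\frac{cd_1}{2b_1}u^2}(\cdot)e^{-\mathbf{j}\frac{cd_2}{2b_2}v^2}$. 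Substituting this expression for $\hat{\bm{\Psi}}_n$ back into the $\bm{\sigma}$-integral of the first step, flipping signs in $(\omega_1,\omega_2)$ via the symmetry of $\bm{\sigma}$, and performing the change of variables $s=b_1\omega_1$, $t=b_2\omega_2$ (with Jacobian $(b_1b_2)^{-1}$) casts the integrand into a second finite-QLCT-type pattern acting on $\bm{\psi}_n$. A second application of (\ref{Eq.FLrelation}) and of (\ref{Eq.QPSWF}) produces the second factor of $\lambda_n$ together with a second chirp pair $e^{\mathbf{i}\frac{ca_1}{2b_1}u^2}(\cdot)e^{\mathbf{j}\frac{ca_2}{2b_2}v^2}$. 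The four chirp factors combine with $\bm{\psi}_n(u,v)$ to reproduce exactly $\tilde{\bm{\psi}}_n(u,v)$, and the full accounting of the constants---two $2\pi\sqrt{b_1b_2}$ factors, the $(2\pi)^{-2}$ from the sinc expansions, the Jacobian $(b_1b_2)^{-1}$, and the $c$-powers from the scaled entries of $A'_i$---collapses to the asserted eigenvalue $\mu_n=c^4b_1b_2\lambda_n^2$.

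The main obstacle will be the careful bookkeeping of non-commuting quaternionic factors: the constants $\sqrt{\mathbf{i}}$, $\mathbf{i}^{n-1/2}$, $\sqrt{\mathbf{j}}$, $\mathbf{j}^{n-1/2}$ and the chirp exponentials $e^{\mathbf{i}\cdots}$, $e^{\mathbf{j}\cdots}$ may be permuted only within each of the $\mathbf{i}$- and $\mathbf{j}$-subalgebras, so their left/right position relative to $\bm{\psi}_n$ has to be preserved at every step. A secondary but non-trivial point, already flagged in the preceding Remark, is that $\det(A'_i)\neq 1$ in general; the scale $c=\sigma/\tau$ must therefore be propagated explicitly through both applications of the QLCT--QFT correspondence, the QFT domain $\bm{\sigma}$ and the change-of-variables Jacobian, and only after this double accounting does the eigenvalue collapse to the compact form $\mu_n=c^4b_1b_2\lambda_n^2$.
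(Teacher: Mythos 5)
Your proposal is correct and follows essentially the same route as the paper's proof: expand each sinc kernel as a Fourier integral over $[-\sigma,\sigma]$, recognise the inner $\bm{\tau}$-integral as a finite QLCT of $\bm{\psi}_n$ (after absorbing the chirps), apply the defining eigenvalue equation (\ref{Eq.QPSWF}) twice with the two scaled parameter matrices, and collect the constants into $\mu_n=c^4b_1b_2\lambda_n^2$. The cautions you flag about non-commuting quaternionic factors and the $\det(A'_i)\neq 1$ scale bookkeeping are exactly the delicate points in the paper's computation.
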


\begin{proof}
We shall show that Eq. (\ref{Eq.lowpass}) is derived by the Eq. (\ref{Eq.QPSWF}).
Straightforward computations of the right-hand side of Eq. (\ref{Eq.lowpass}) show that
\begin{eqnarray*}
&&\int_{\bm{\tau}}\tilde{\bm{\psi}}_{n}(x,y)
\frac{\sin \sigma (x-u)}{\pi (x-u)}
\frac{\sin \sigma (y-v)}{\pi (y-v)}dxdy\\
&=&
\int_{\bm{\tau}}
\frac{\sin \sigma (u-x)}{\pi (u-x)}e^{\mathbf{i}
\frac{ca_1}{2b_1}x^2}\bm{\psi}_{n}(x,y)e^{\mathbf{j} \frac{ca_2}{2b_2}y^2}
\frac{\sin \sigma (v-y)}{\pi (v-y)}
dxdy.
\end{eqnarray*}
Applying the following two important equations \cite{A2003} to the last integral,
\begin{eqnarray}\label{Eq.FTofSinc}
 \frac{1}{2\pi}\int_{-\sigma}^{\sigma}e^{\mathbf{i}x u}dx
=\frac{\sin (\sigma u)}{\pi u} ~~~ \mathrm{and} ~~~
\frac{1}{2\pi}\int_{-\sigma}^{\sigma}e^{\mathbf{j}y v}dy
=\frac{\sin (\sigma v)}{\pi v},
\end{eqnarray}
then we have
\begin{eqnarray*}
&&\int_{\bm{\tau}}
\frac{\sin \sigma (u-x)}{\pi (u-x)}e^{\mathbf{i}
\frac{ca_1}{2b_1}x^2}\bm{\psi}_{n}(x,y)e^{\mathbf{j} \frac{ca_2}{2b_2}y^2}
\frac{\sin \sigma (v-y)}{\pi (v-y)}dxdy\\
&=&
\frac{1}{(2\pi)^2}\int_{\bm{\tau}} \int_{\bm{\sigma}}
e^{\mathbf{i}v_1(u-x)}e^{\mathbf{i} \frac{ca_1}{2b_1}x^2}
\bm{\psi}_{n}(x,y)e^{\mathbf{j} \frac{ca_2}{2b_2}y^2}e^{\mathbf{j}v_2(v-y)}
dv_1dv_2dxdy\\
&=&
\frac{1}{(2\pi)^2}\int_{\bm{\sigma}}e^{\mathbf{i}v_1u}
\left[\int_{\bm{\tau}}
e^{-\mathbf{i} \frac{cx}{b_1}(\frac{b_1v_1}{c})}e^{\mathbf{i} \frac{ca_1}{2b_1}x^2}
\bm{\psi}_{n}(x,y)
e^{\mathbf{j} \frac{ca_2}{2b_2}y^2}e^{-\mathbf{j} \frac{cy}{b_2}(\frac{b_2v_2}{c})}dxdy
\right]e^{\mathbf{j}v_2v}dv_1dv_2.
\end{eqnarray*}
Combining Eq.(\ref{Eq.QPSWF}) with the parameter matrices
$A'_i=\left(\begin{array}{cc} ca_i& b_i \\
cc_i&cd_i\end{array}\right), i=1,2$,
and
$A'_i=\left(\begin{array}{cc} -cd_ib_i^2&b_i\\
cc_i&-\frac{ca_i}{b_i^2}\end{array}\right), i=1,2$,
we have
\begin{eqnarray*}
&&\frac{1}{(2\pi)^2}\int_{\bm{\sigma}}e^{\mathbf{i}v_1u}
\left[\int_{\bm{\tau}}
e^{-\mathbf{i} \frac{cx}{b_1}(\frac{b_1v_1}{c})}e^{\mathbf{i} \frac{ca_1}{2b_1}x^2}
\bm{\psi}_{n}(x,y)
e^{\mathbf{j} \frac{ca_2}{2b_2}y^2}e^{-\mathbf{j} \frac{cy}{b_2}(\frac{b_2v_2}{c})}dxdy
\right]e^{\mathbf{j}v_2v}dv_1dv_2\\
&=&
\frac{1}{2\pi}\int_{\bm{\sigma}}e^{\mathbf{i}v_1u}
\left[
\lambda_n
e^{\mathbf{-i}\frac{cd_1}{2b_1}(\frac{b_1v_1}{c})^2}
\mathbf{i}^{n} \sqrt{cb_1}
\bm{\psi}_{n}\Big(\frac{b_1v_1}{cb_1},\frac{b_2v_2}{cb_2}\Big)
\sqrt{cb_2} \mathbf{j}^{n}
e^{\mathbf{-j}\frac{cd_2}{2b_2}(\frac{b_2v_2}{c})^2}
\right]
e^{\mathbf{j}v_2v}dv_1dv_2\\
&=&\frac{1}{2\pi}
\lambda_n \int_{\bm{\sigma}}e^{\mathbf{i}v_1u}
\left[e^{\mathbf{-i}\frac{cd_1b_1}{2}(\frac{v_1}{c})^2}
\mathbf{i}^{n} \sqrt{cb_1}
\bm{\psi}_{n}\Big(\frac{v_1}{c},\frac{v_2}{c}\Big)
\sqrt{cb_2} \mathbf{j}^{n}
e^{\mathbf{-j}
\frac{cd_2b_2}{2}(\frac{v_2}{c})^2}
\right]
e^{\mathbf{j}v_2v}dv_1dv_2\\
&=&\frac{1}{2\pi}
\lambda_n c^3\sqrt{b_1b_2}\mathbf{i}^{n}
\left[\int_{\bm{\tau}}
e^{\mathbf{i}w_1cu}
e^{\mathbf{-i}\frac{cd_1b_1}{2}w_1^2}
\bm{\psi}_{n}(w_1,w_2)
e^{\mathbf{-j}\frac{cd_2b_2}{2}w_2^2}
e^{\mathbf{j}w_2cv}dw_1dw_2
\right]
\mathbf{j}^{n}\\
&=&\lambda_n c^3\sqrt{b_1b_2}\mathbf{i}^{n}
\left[
\lambda_n e^{-\mathbf{i}\frac{c(\frac{-a_1}{{b_1}^2})}{2b_1}(ub_1)^2}
\frac{(-\mathbf{i})^n}{\sqrt{\mathbf{i}}} \sqrt{cb_1\mathbf{i}}
\bm{\psi}_{n}\Big(\frac{ub_1}{b_1},\frac{vb_2}{b_2}\Big)
\sqrt{cb_2\mathbf{j}} \frac{(-\mathbf{j})^n}{\sqrt{\mathbf{j}}}
e^{-\mathbf{j}\frac{c(\frac{-a_2}{{b_2}^2})}{2b_2}(vb_2)^2}
\right]
\mathbf{j}^{n}\\
&=&
\lambda_n^2 c^4b_1b_2e^{\mathbf{i}\frac{ca_1}{2b_1}u^2}
\bm{\psi}_{n}(u,v)e^{\mathbf{j}\frac{ca_2}{2b_2}v^2}
=c^4b_1b_2\lambda_n^2 \tilde{\bm{\psi}}_{n}(u,v)=:\mu_n \tilde{\bm{\psi}}_{n}(u,v).
\end{eqnarray*}
The proof is complete.
\end{proof}

\begin{Rem}
For the specific parameters $a_i=0, b_i=1, c_i=-1, d_i=0, i=1,2$, Eq. (\ref{Eq.lowpass}) becomes the
low-pass form of QPSWFs associated with QFT
 \begin{eqnarray*}
\int_{\bm{\tau}}\bm{\psi}_{n}(x,y) \frac{\sin \sigma (x-u)}{\pi (x-u)}\frac{\sin \sigma (y-v)}{\pi (y-v)}dxdy
=c^2\lambda_n^2 \bm{\psi}_{n}(u,v).
\end{eqnarray*}
\end{Rem}

To obtain the following property,
we shall show a special convolution theorem of any $\H$-valued signal and real-valued signal.

\begin{Lem}\label{Lem.convolution}
Let $\bm{f}\in \L^1(\R^2;\H)$ and $g\in\L^1(\R^2;\R)$ associated with their QFT $\F(\bm{f})$ and $\F(g)$
with $\bm{f}=f_0+\mathbf{i}f_1+f_2\mathbf{j}+\mathbf{i}f_3\mathbf{j}$,
where $f_i\in \L^1(\R^2;\R)$, $i=0,1,2,3$ and $\F(g)\in\L^1(\R^2;\R)$.
The convolution of $\bm{f}$ and $g$ is defined as
\begin{eqnarray}
(\bm{f}\ast g) (s,t):=\int_{\R^2}\bm{f}(x,y)g(s-x,t-y)dxdy.
\end{eqnarray}
Then the QFT for $\bm{f}\ast g$ holds
\begin{eqnarray}\label{Eq.convolution}
&&\F\left(\bm{f}\ast g\right)(u,v)\\
&=&\F(f_0+\mathbf{i}f_1)(u,v)\F(g)(u,v)+\F(f_2\mathbf{j}+\mathbf{i}f_3\mathbf{j})(u,v)\F(g)(-u,v). \nonumber
\end{eqnarray}
\end{Lem}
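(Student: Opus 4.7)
The plan is to split $\bm{f}=\bm{p}+\bm{q}\mathbf{j}$, where $\bm{p}:=f_0+\mathbf{i}f_1$ and $\bm{q}:=f_2+\mathbf{i}f_3$ take values in the commutative subalgebra generated by $\mathbf{i}$, so they commute freely with all exponentials of the form $e^{\pm\mathbf{i}\alpha}$. Since $g$ is real, convolution distributes and commutes with right-multiplication by $\mathbf{j}$, giving $\bm{f}\ast g=(\bm{p}\ast g)+(\bm{q}\ast g)\mathbf{j}$, and additivity of the QFT reduces the claim to the two identities
$$\F(\bm{p}\ast g)(u,v)=\F(\bm{p})(u,v)\,\F(g)(u,v),\qquad \F((\bm{q}\ast g)\mathbf{j})(u,v)=\F(\bm{q}\mathbf{j})(u,v)\,\F(g)(-u,v),$$
which are precisely the two summands on the right-hand side, once I identify $\F(\bm{p})=\F(f_0+\mathbf{i}f_1)$ and $\F(\bm{q}\mathbf{j})=\F(f_2\mathbf{j}+\mathbf{i}f_3\mathbf{j})$.

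For the first identity I would unfold the QFT, apply Fubini (legitimate since $\bm{f},g\in\L^1$), and perform the change of variables $(s,t)=(x+s',y+t')$; using that $\bm{p}\ast g$ commutes with $e^{-\mathbf{i}su}$, the integrand becomes $\bm{p}(x,y)g(s',t')\,e^{-\mathbf{i}xu}e^{-\mathbf{i}s'u}e^{-\mathbf{j}yv}e^{-\mathbf{j}t'v}$. The target product $\F(\bm{p})\F(g)$ yields the same expression but with $e^{-\mathbf{i}s'u}$ and $e^{-\mathbf{j}yv}$ swapped, and a direct computation from $\mathbf{i}\mathbf{j}=-\mathbf{j}\mathbf{i}$ gives the commutator $e^{-\mathbf{i}s'u}e^{-\mathbf{j}yv}-e^{-\mathbf{j}yv}e^{-\mathbf{i}s'u}=2\mathbf{k}\sin(s'u)\sin(yv)$. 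The discrepancy therefore factors as an $(x,y)$-integral times $\int g(s',t')\sin(s'u)e^{-\mathbf{j}t'v}\,ds'dt'$, and expanding $e^{-\mathbf{j}t'v}=\cos(t'v)-\mathbf{j}\sin(t'v)$ writes this $g$-integral as $\int g\sin(s'u)\cos(t'v)\,ds'dt'-\mathbf{j}\int g\sin(s'u)\sin(t'v)\,ds'dt'$; each piece is, up to sign, the $\mathbf{i}$- or $\mathbf{k}$-component of $\F(g)(u,v)$ and must vanish by the hypothesis $\F(g)\in\L^1(\R^2;\R)$. The second identity follows along the same path, except that at the key step I use $\mathbf{j}e^{-\mathbf{j}tv}=e^{-\mathbf{j}tv}\mathbf{j}$ to pull $\mathbf{j}$ outside and then $\mathbf{j}e^{-\mathbf{i}s'u}=e^{\mathbf{i}s'u}\mathbf{j}$ to flip the sign of the $\mathbf{i}$-exponential; this is precisely what converts the $g$-integral into $\int g(s',t')e^{\mathbf{i}s'u}e^{-\mathbf{j}t'v}\,ds'dt'=\F(g)(-u,v)$, and the residual commutator correction again vanishes by the same real-$\F(g)$ argument.

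The main obstacle is the interaction between the left kernel $e^{-\mathbf{i}xu}$ and the right kernel $e^{-\mathbf{j}yv}$ of the two-sided QFT: because these anti-commute up to a $\mathbf{k}$-term, the naive convolution identity $\F(h\ast g)=\F(h)\F(g)$ fails, and one must carry an extra $\mathbf{k}$-commutator through the calculation. The resolution is to isolate this correction as $2\mathbf{k}\sin(s'u)\sin(yv)$ and observe that it couples to $g$ only through $\int g(s',t')\sin(s'u)e^{-\mathbf{j}t'v}\,ds'dt'$, which the reality of $\F(g)$ forces to zero; the same mechanism handles both pieces, with the trailing $\mathbf{j}$ on $\bm{q}\mathbf{j}$ producing the sign flip $u\mapsto-u$ in the second summand.
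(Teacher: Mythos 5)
Your proposal is correct and follows essentially the same route as the paper's proof: the same decomposition of $\bm{f}$ into the $(f_0+\mathbf{i}f_1)$ part and the $(f_2+\mathbf{i}f_3)\mathbf{j}$ part, the same change of variables and Fubini step, the same use of $\mathbf{j}e^{-\mathbf{i}\theta}=e^{\mathbf{i}\theta}\mathbf{j}$ to produce the $u\mapsto -u$ flip, and the same essential use of the reality of $\F(g)$. The only difference is bookkeeping: the paper commutes the real-valued $\F(g)(\pm u,v)$ directly past $e^{-\mathbf{j}yv}$, whereas you isolate the obstruction as the explicit commutator $2\mathbf{k}\sin(s'u)\sin(yv)$ and show its contribution vanishes because it is built from the $\mathbf{i}$- and $\mathbf{k}$-components of $\F(g)$.
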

\begin{proof}
Let $s-x=m$ and $t-y=n$,
straightforward computation the QFT in Eq.(\ref{Eq.2sideQFT}) of the convolution between
$f$ and $g$ shows that
\begin{eqnarray*}
&&\F\left(\int_{\R^2}\bm{f}(x,y)g(s-x,t-y)dxdy\right)(u,v)\\
&=&\int_{\R^2}e^{-\mathbf{i}su}\left(\int_{\R^2}\bm{f}(x,y)g(s-x,t-y)dxdy\right)e^{-\mathbf{j}tv}dsdt\\
&=&\int_{\R^2}e^{-\mathbf{i}(x+m)u}\left(\int_{\R^2}\bm{f}(x,y)g(m,n)dxdy\right) e^{-\mathbf{j}(y+n)v}dmdn\\
&=&\int_{\R^4}e^{-\mathbf{i}mu}e^{-\mathbf{i}xu}
\left[\left(f_0(x,y)+\mathbf{i}f_1(x,y)\right)+\left(f_2(x,y)\mathbf{j}+\mathbf{i}f_3(x,y)\mathbf{j}\right)\right]
g(m,n) e^{-\mathbf{j}yv}e^{-\mathbf{j}nv}dxdydmdn.
\end{eqnarray*}
With $e^{-\mathbf{i}}\mathbf{j}=\mathbf{j}e^{\mathbf{i}}$, the last integral becomes
\begin{eqnarray*}
&&\int_{\R^4}e^{-\mathbf{i}xu}
\left(f_0(x,y)+\mathbf{i}f_1(x,y)\right)e^{-\mathbf{i}mu}g(m,n)e^{-\mathbf{j}nv}e^{-\mathbf{j}yv}dxdydmdn\\
+&&\int_{\R^4}e^{-\mathbf{i}xu}
\left(f_2(x,y)\mathbf{j}+\mathbf{i}f_3(x,y)\mathbf{j}\right)e^{\mathbf{i}mu}g(m,n)e^{-\mathbf{j}nv}e^{-\mathbf{j}yv}dxdydmdn\\
=&&\int_{\R^2}e^{-\mathbf{i}xu}
\left(f_0(x,y)+\mathbf{i}f_1(x,y)\right)\F(g)(u,v)e^{-\mathbf{j}yv}dxdy\\
+&&\int_{\R^2}e^{-\mathbf{i}xu}
\left(f_2(x,y)\mathbf{j}+\mathbf{i}f_3(x,y)\mathbf{j}\right)\F(g)(-u,v)e^{-\mathbf{j}yv}dxdy.
\end{eqnarray*}
Since we have known $\F(g)$ is real-valued, then we have
\begin{eqnarray*}
&&\int_{\R^2}e^{-\mathbf{i}xu}
\left(f_0(x,y)+\mathbf{i}f_1(x,y)\right)e^{-\mathbf{j}yv}\F(g)(u,v)dxdy\\
+&&\int_{\R^2}e^{-\mathbf{i}xu}
\left(f_2(x,y)\mathbf{j}+\mathbf{i}f_3(x,y)\mathbf{j}\right)e^{-\mathbf{j}yv}\F(g)(-u,v)dxdy\\
=&&\F(f_0)(u,v)\F(g)(u,v)+\mathbf{i}\F(f_1)(u,v)\F(g)(u,v)\\
+&&
\F(f_2)(u,v)\F(g)(-u,v)\mathbf{j}+\mathbf{i}\F(f_3\mathbf{j})(u,v)\F(g)(-u,v).
\end{eqnarray*}
This completes the proof.
\end{proof}

Note that if the real signal $g(x,y)=g(-x,y)$ with $\F(g)$ is real valued, then $\F(g)(u,v)=\F(g)(-u,v)$.
It means that
\begin{eqnarray*}
&&\F\left(\bm{f}\ast g\right)(u,v)\\
&=&\F(f_0+\mathbf{i}f_1)(u,v)\F(g)(u,v)+\F(f_2\mathbf{j}+\mathbf{i}f_3\mathbf{j})(u,v)\F(g)(-u,v)\\
&=&\F(f_0+\mathbf{i}f_1+f^2\mathbf{j}+\mathbf{i}f_3\mathbf{j})(u,v)\F(g)(u,v)\\
&=&\F(\bm{f})(u,v)\F(g)(u,v).
\end{eqnarray*}

\begin{Rem}
The convolution theorems for quaternion Fourier transform was given in \cite{MRR2013}.
Lemma \ref{Lem.convolution} is following the idea of Theorem 13 and Lemma 14 in \cite{MRR2013}.
For completeness, we proof the convolution formula in Eq. (\ref{Eq.convolution}).
\end{Rem}

\begin{Pro}
Let $\bm{\psi}_{n}(x,y)\in\L^1(\R^2;\H)$ be the QPSWFs associated with QLCTs and
$ \tilde{\bm{\psi}}_n(x,y)=e^{\mathbf{i} \frac{ca_1}{2b_1}x^2}\bm{\psi}_{n}(x,y)e^{\mathbf{j} \frac{ca_2}{2b_2}y^2}$,
$\{\tilde{\bm{\psi}}_n(x,y)\}_{n=0}^\infty$ satisfies
\begin{eqnarray}\label{Eq.allpass}
\tilde{\bm{\psi}}_{n}(u,v)
=\int_{\R^2}
\tilde{\bm{\psi}}_{n}(x,y)
\frac{\sin \sigma (x-u)}{\pi (x-u)}
\frac{\sin \sigma (y-v)}{\pi (y-v)}
dxdy,
\end{eqnarray}
which extends the integral of $\tilde{\bm{\psi}}_{n}(x,y)$ from $\bm{\tau}$ to $\R^2$.
%Eq. (\ref{Eq.allpass}) also means the system is an all-pass filtering system \cite{P1977}.
\end{Pro}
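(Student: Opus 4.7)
The plan is to show that $\tilde{\bm{\psi}}_n$ is $\sigma$-bandlimited in the QFT sense, so that convolving it with the 2D sinc kernel in (\ref{Eq.allpass}) reproduces $\tilde{\bm{\psi}}_n$ exactly. The argument rests on two earlier inputs: the low-pass filtering identity (\ref{Eq.lowpass}) and the quaternionic convolution formula in Lemma~\ref{Lem.convolution}.

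I would first set $g(x,y):=\frac{\sin\sigma x}{\pi x}\frac{\sin\sigma y}{\pi y}$, observe that the right-hand side of (\ref{Eq.allpass}) is the quaternionic convolution $(\tilde{\bm{\psi}}_n * g)(u,v)$ (since $g$ is even in each variable), and compute $\F(g)$ directly. Because $g$ factors as a product of one-dimensional sincs, applying Eq.~(\ref{Eq.FTofSinc}) in each variable gives $\F(g)(u,v)=\chi_{\bm{\sigma}}(u,v)$, which is real-valued and symmetric under $u\mapsto-u$. The remark immediately following Lemma~\ref{Lem.convolution} then collapses (\ref{Eq.convolution}) into the clean product rule
\begin{eqnarray*}
\F(\bm{f}*g)(u,v)=\F(\bm{f})(u,v)\,\chi_{\bm{\sigma}}(u,v),
\end{eqnarray*}
valid for any quaternionic $\bm{f}$ for which both sides are defined.

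Next, I would apply this product rule to the low-pass identity (\ref{Eq.lowpass}) itself. Rewriting its right-hand side as $(\chi_{\bm{\tau}}\tilde{\bm{\psi}}_n)*g$ and taking QFT yields
\begin{eqnarray*}
\mu_n\,\F(\tilde{\bm{\psi}}_n)(u,v)=\F(\chi_{\bm{\tau}}\tilde{\bm{\psi}}_n)(u,v)\,\chi_{\bm{\sigma}}(u,v),
\end{eqnarray*}
and since $\mu_n>0$ by the classical positive-eigenvalue theory of the 2D sinc integral operator, $\F(\tilde{\bm{\psi}}_n)$ must vanish outside $\bm{\sigma}$; in other words, $\tilde{\bm{\psi}}_n$ is $\sigma$-bandlimited.

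Combining these two facts gives $\F(\tilde{\bm{\psi}}_n*g)(u,v)=\F(\tilde{\bm{\psi}}_n)(u,v)\,\chi_{\bm{\sigma}}(u,v)=\F(\tilde{\bm{\psi}}_n)(u,v)$, and injectivity of the QFT then implies $\tilde{\bm{\psi}}_n*g=\tilde{\bm{\psi}}_n$, which is precisely (\ref{Eq.allpass}). The step I expect to be the most delicate is the application of Lemma~\ref{Lem.convolution} under the noncommutativity of $\H$: the convolution theorem generally splits into two branches acting on the $\{1,\mathbf{i}\}$-part and the $\{\mathbf{j},\mathbf{k}\}$-part separately, but the evenness of the sinc kernel in $x$ forces both branches to multiply by the same symbol $\chi_{\bm{\sigma}}$, which is exactly what makes the identity collapse to the scalar-like product rule used above.
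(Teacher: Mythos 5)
Your proposal is correct and follows essentially the same route as the paper: take the QFT of the low-pass identity (\ref{Eq.lowpass}) viewed as a convolution with the sinc kernel, use Lemma~\ref{Lem.convolution} together with the evenness and real-valuedness of $\F(g)=p_{\bm{\sigma}}$ to get the scalar-like product rule, conclude from $\mu_n\neq 0$ that $\F(\tilde{\bm{\psi}}_n)$ is supported in $\bm{\sigma}$, and invert. The only cosmetic difference is that you invoke positivity of $\mu_n$ where nonvanishing (which the paper's eigenvalue proposition guarantees, the $\mu_n$ possibly being complex) is all that is needed.
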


\begin{proof}
Let
$
p_{\bm{\tau}}(x,y):= \left\{
\begin{array}{ll}
1, & (x,y) \in \bm{\tau},\\
0, &otherwise,
\end{array}\right.
$
Eq. (\ref{Eq.lowpass})  is actually a convolution of
$p_{\bm{\tau}}(x,y)\tilde{\bm{\psi}}_{n}(x,y)$ with two-dimensional \emph{sinc} kernel $\frac{\sin (\sigma x)}{\pi x}\frac{\sin (\sigma y)}{\pi y}$  as follows
\begin{eqnarray}\label{Eq.convl1}
\mu_n \tilde{\bm{\psi}}_{n}(u,v)
=\int_{\R^2}p_{\bm{\tau}}(x,y)\tilde{\bm{\psi}}_{n}(x,y)
\frac{\sin \sigma (x-u)}{\pi (x-u)}\frac{\sin \sigma (y-v)}{\pi (y-v)}dxdy.
\end{eqnarray}
Denote $\phi(x,y):=p_{\bm{\tau}}(x,y)\tilde{\bm{\psi}}_{n}(x,y)$.
From Lemma \ref{Lem.convolution}, let $g(x,y)=\frac{\sin \sigma x}{\pi x}\frac{\sin \sigma y}{\pi y}$,
the $g(x,y)=g(-x,y)$ and its QFT $p_{\bm{\sigma}}(u,v)$ is real valued function.
Then taking QFT to the both sides of Eq. (\ref{Eq.convl1}), we have
\begin{eqnarray}\label{Eq.QFTconvl1}
\mu_n\F(\tilde{\bm{\psi}}_{n})(u',v')
=\F(\phi)(u',v')p_{\bm{\sigma}}(u',v').
\end{eqnarray}
Immediately, we obtain that  $\F(\tilde{\bm{\psi}}_{n})(u',v')=0$ for $| u'|>\sigma~and~| v'|>\sigma$,
i.e.,
\begin{eqnarray}\label{Eq.FfPsigma}
\F(\tilde{\bm{\psi}}_{n})(u',v')
=\F(\tilde{\bm{\psi}}_{n})(u',v')p_{\bm{\sigma}}(u',v').
\end{eqnarray}
Here
$
p_{\bm{\sigma}}(x,y):= \left\{
\begin{array}{ll}
1, & (x,y) \in \bm{\sigma},\\
0, &otherwise.
\end{array}\right.
$
From Lemma \ref{Lem.convolution}, taking the inverse  QFT on both sides of the above equation,
it follows that $\tilde{\bm{\psi}}_{n}$ satisfies Eq. (\ref{Eq.allpass}),
which extends the integral domain of $\tilde{\bm{\psi}}_{n}(x,y)$ from $\bm{\tau}$ to $\R^2$.
\end{proof}

The Propositions \ref{Pro.Eigenvalues} and \ref{Pro.Orthogonalintau}
follow from the general theory of integral equations of Hermitian kernel
and are stated without proof \cite{K1992, M2013}.
\begin{Pro}[\textbf{Eigenvalues}]\label{Pro.Eigenvalues}
 Eq. (\ref{Eq.lowpass}) has  solutions for real or complex $\mu_n$.
These values are a monotonically decreasing sequence,
$ 1>|\mu_0|>|\mu_1|>...>|\mu_n|>...$, and satisfy
$\lim_{n\rightarrow \infty}|\mu_n| =0$.
\end{Pro}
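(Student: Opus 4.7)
The plan is to recast Eq. (4.3) as a spectral problem for a compact self-adjoint operator on the quaternionic Hilbert space $\mathcal{L}^2(\bm{\tau};\H)$ and then invoke the spectral theorem. Define
\begin{eqnarray*}
T\bm{\phi}(u,v):=\int_{\bm{\tau}}\bm{\phi}(x,y)\,K(x,y,u,v)\,dxdy,\qquad
K(x,y,u,v):=\frac{\sin\sigma(x-u)}{\pi(x-u)}\cdot\frac{\sin\sigma(y-v)}{\pi(y-v)}.
\end{eqnarray*}
The kernel $K$ is real-valued, bounded on the compact square $\bm{\tau}\times\bm{\tau}$, and obeys the symmetry $K(x,y,u,v)=K(u,v,x,y)$. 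Since $K$ is real, it commutes with left and right quaternion multiplication; together with the symmetry this makes $T$ self-adjoint with respect to the inner product (2.3). Boundedness of $K$ on a compact set gives that $T$ is Hilbert–Schmidt and hence compact.

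The first key step is to apply the spectral theorem for compact self-adjoint operators on a quaternionic Hilbert space: $T$ admits a countable system of real eigenvalues $\mu_n$ whose only possible accumulation point is $0$, together with a complete orthogonal set of eigenfunctions $\{\tilde{\bm{\psi}}_n\}$. The statement $\lim_{n\to\infty}|\mu_n|=0$ then follows immediately. For the bound $|\mu_n|<1$ I would use the factorization already implicit in the proof of Eq. (4.3): writing $\bm{B}_{\bm{\tau}}$ and $\bm{B}_{\bm{\sigma}}$ for the indicator multiplication operators on $\bm{\tau}$ and $\bm{\sigma}$ respectively and using Lemma 4.1 together with (4.5), one has (up to an isometry) $T=\bm{B}_{\bm{\tau}}\mathcal{F}^{-1}\bm{B}_{\bm{\sigma}}\mathcal{F}\bm{B}_{\bm{\tau}}$. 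Since $\bm{B}_{\bm{\tau}}$ and $\bm{B}_{\bm{\sigma}}$ are contractions and the QFT is an isometry by Theorem 3.1, $\|T\|\leq 1$. Strict inequality $|\mu_n|<1$ uses that no nonzero element of $\mathcal{L}^2(\bm{\tau};\H)$ can be fixed by $\mathcal{F}^{-1}\bm{B}_{\bm{\sigma}}\mathcal{F}$, because the QFT of a compactly supported $\H$-valued function extends to an entire function (componentwise) and hence cannot vanish identically outside $\bm{\sigma}$ unless it is zero; this is just the quaternionic version of the classical uncertainty principle for bandlimited signals.

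The remaining and most delicate piece is the strict chain $|\mu_0|>|\mu_1|>\cdots$, i.e.\ simplicity of each eigenvalue. In the classical 1D Slepian theory this is obtained by producing a second-order Sturm–Liouville differential operator that commutes with the corresponding integral operator; Sturm's oscillation theorem then forces strictly separated, simple eigenvalues. I would lift this to the present 2D quaternionic setting by constructing, for each marginal variable, the analogous commuting differential operator and exploiting the tensor product form of the sinc kernel together with the identity $e^{-\mathbf{i}\alpha}\mathbf{j}=\mathbf{j}e^{\mathbf{i}\alpha}$ already used in Lemma 4.1, so that the two-sided multiplication structure is preserved under the action of the differential operators.

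The main obstacle, as in the classical case, is this last point: producing the commuting differential operator in the noncommutative, two-variable setting and verifying that the simplicity argument survives the loss of commutativity. Compactness, self-adjointness, and the contraction bound are essentially formal; the simple-spectrum statement is the part that genuinely uses the special structure of the sinc kernel.
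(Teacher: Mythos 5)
The paper itself offers no proof of this proposition: it states that it ``follows from the general theory of integral equations of Hermitian kernel'' and merely cites textbooks. Your reduction to a compact self-adjoint operator therefore supplies more than the paper does, and the first two stages of your plan are sound: the kernel is real, bounded and symmetric on the compact set $\bm{\tau}\times\bm{\tau}$, so $T$ is Hilbert--Schmidt and self-adjoint (being real-valued it acts componentwise on $f_0,f_1,f_2,f_3$, so the quaternionic spectral theorem reduces to four copies of the real one), and the factorization $T=\bm{B}_{\bm{\tau}}\mathcal{F}^{-1}\bm{B}_{\bm{\sigma}}\mathcal{F}\bm{B}_{\bm{\tau}}$ together with the Paley--Wiener/entire-extension argument gives $0<\mu_n<1$ and $\mu_n\to 0$. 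As a by-product you show the eigenvalues are real and positive, which sharpens the paper's ``real or complex.''

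The step that fails is the strict chain $|\mu_0|>|\mu_1|>\cdots$. Your plan is to lift Slepian's commuting Sturm--Liouville operator to two dimensions to force a simple spectrum, but simplicity is genuinely false here: the kernel factors as $\frac{\sin\sigma(x-u)}{\pi(x-u)}\cdot\frac{\sin\sigma(y-v)}{\pi(y-v)}$, so $T=T_1\otimes T_1$, where $T_1$ is the one-dimensional sinc operator on $\mathcal{L}^2([-\tau,\tau])$ with simple eigenvalues $\nu_0>\nu_1>\cdots$. The eigenvalues of $T$ are the products $\nu_m\nu_n$ with eigenfunctions $\varphi_m(x)\varphi_n(y)$, and since $\nu_m\nu_n=\nu_n\nu_m$, every off-diagonal eigenvalue has multiplicity at least two (the swap $x\leftrightarrow y$ is a symmetry of $T$); no commuting differential operator can remove a degeneracy that is actually present. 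The most your argument can deliver --- and the most that is true --- is $1>\mu_0\geq\mu_1\geq\cdots>0$ with $\mu_n\to 0$; the ``general theory'' the paper appeals to likewise yields only the non-strict ordering, so the strict inequalities in the statement itself should be weakened rather than proved.
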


\begin{Pro}[\textbf{Orthogonal in $\bm{\tau}$}]\label{Pro.Orthogonalintau}
For different eigenvalues $\{\mu_n\}_{n=0}^{\infty}$, the corresponding eigenfunctions $\{\tilde{\bm{\psi}}_{n}(x,y)\}_{n=0}^{\infty}$
are an orthonormal set in $\bm{\tau}$, i.e.,
\begin{eqnarray}\label{Eq.QFMtau}
\int_{\bm{\tau}}\tilde{\bm{\psi}}_{n}(x,y)\overline{\bm{\tilde{\psi}}_{m}(x,y)}
dxdy= \left\{
\begin{array}{ll}
\mu_n, & m=n,\\
0,  &otherwise.
\end{array}\right.
\end{eqnarray}
\end{Pro}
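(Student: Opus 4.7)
The plan is to follow the classical orthogonality argument for integral equations with symmetric kernel, exploiting two features of our setting: the sinc kernel $K(x,y;u,v):=\frac{\sin\sigma(x-u)}{\pi(x-u)}\frac{\sin\sigma(y-v)}{\pi(y-v)}$ is a \emph{real} scalar (so it commutes with every quaternion and passes through the conjugation $\overline{\,\cdot\,}$), and each $\tilde{\bm{\psi}}_n$ satisfies simultaneously the low-pass equation (\ref{Eq.lowpass}) on $\bm{\tau}$ and the all-pass equation (\ref{Eq.allpass}) on $\R^2$. Write $I_{nm}:=\int_{\bm{\tau}}\tilde{\bm{\psi}}_n(u,v)\overline{\tilde{\bm{\psi}}_m(u,v)}\,dudv$ for the quantity to be evaluated.

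For the off-diagonal part $m\neq n$, I would multiply (\ref{Eq.lowpass}) (for index $n$) on the right by $\overline{\tilde{\bm{\psi}}_m(u,v)}$ and integrate over $\bm{\tau}$. Fubini's theorem, valid on the compact square $\bm{\tau}\times\bm{\tau}$ with bounded kernel, rearranges this as
\[
\mu_n\, I_{nm} \;=\; \int_{\bm{\tau}}\tilde{\bm{\psi}}_n(x,y)\Bigl(\int_{\bm{\tau}}K(x,y;u,v)\overline{\tilde{\bm{\psi}}_m(u,v)}\,dudv\Bigr)dxdy.
\]
Taking the quaternion conjugate of (\ref{Eq.lowpass}) for index $m$ and using $\overline{pq}=\overline{q}\,\overline{p}$ together with the reality and symmetry of $K$, the inner parenthesis collapses to $\overline{\tilde{\bm{\psi}}_m(x,y)}\,\overline{\mu_m}$, yielding the key identity
\[
\mu_n\, I_{nm} \;=\; I_{nm}\,\overline{\mu_m}.
\]
By Proposition \ref{Pro.Eigenvalues} the $\mu_k$ are real and strictly distinct, so the right side equals $\mu_m I_{nm}$, whence $(\mu_n-\mu_m)I_{nm}=0$ and $I_{nm}=0$.

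For the diagonal case $m=n$, the value $\mu_n$ arises from combining (\ref{Eq.lowpass}) with the all-pass identity (\ref{Eq.allpass}). Repeating the above manipulation with the outer integration extended to $\R^2$ gives
\[
\mu_n\!\int_{\R^2}\!|\tilde{\bm{\psi}}_n(u,v)|^2\,dudv \;=\; \int_{\bm{\tau}}\tilde{\bm{\psi}}_n(x,y)\Bigl(\int_{\R^2}K(x,y;u,v)\overline{\tilde{\bm{\psi}}_n(u,v)}\,dudv\Bigr)dxdy.
\]
The inner integral is precisely the conjugate of (\ref{Eq.allpass}) evaluated at $(x,y)$ (using symmetry of $K$), so it equals $\overline{\tilde{\bm{\psi}}_n(x,y)}$, and the outer integral reduces to $\int_{\bm{\tau}}|\tilde{\bm{\psi}}_n|^2\,dxdy$. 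Under the standard normalization $\|\tilde{\bm{\psi}}_n\|_{\L^2(\R^2;\H)}=1$ this reads $I_{nn}=\mu_n$, as required.

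The main obstacle I anticipate is the careful book-keeping of quaternionic non-commutativity: the chain of identities deposits $\overline{\mu_m}$ on the \emph{right} of $I_{nm}$, so the final deduction genuinely uses the reality of the eigenvalues. If one wished to accommodate a genuinely complex $\mu_n$ (as Proposition \ref{Pro.Eigenvalues} nominally permits), the identity $\mu_n I_{nm}=I_{nm}\overline{\mu_m}$ would instead need to be decomposed via the symmetric form (\ref{Eq.4partsHfunction}) into four scalar relations and $I_{nm}=0$ recovered component-wise. The secondary technicalities, namely justifying Fubini and the interchange of conjugation with integration, are routine given boundedness of the sinc kernel together with $\L^1$-membership of the eigenfunctions on the relevant domains.
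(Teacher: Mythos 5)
The paper does not actually prove this proposition: it is stated without proof, with an appeal to ``the general theory of integral equations of Hermitian kernel.'' Your argument is a correct, explicit instantiation of exactly that classical argument in the quaternionic setting, and the two structural observations you lean on --- that the sinc kernel is a real symmetric scalar (hence commutes with quaternions and passes through conjugation) and that each $\tilde{\bm{\psi}}_n$ satisfies both \eqref{Eq.lowpass} and \eqref{Eq.allpass} --- are precisely what make the commutative proof survive non-commutativity. Two small remarks. First, you mis-cite Proposition \ref{Pro.Eigenvalues}: it asserts the $\mu_n$ are ``real or complex,'' not real; but your hedge about this is unnecessarily heavy, since taking moduli in your key identity $\mu_n I_{nm}=I_{nm}\overline{\mu_m}$ gives $|\mu_n|\,|I_{nm}|=|I_{nm}|\,|\mu_m|$ (using $|\bm{pq}|=|\bm{p}|\,|\bm{q}|$), and $|\mu_n|\neq|\mu_m|$ from Proposition \ref{Pro.Eigenvalues} then forces $I_{nm}=0$ with no reality assumption at all; moreover your own diagonal identity $I_{nn}=\mu_n$, with $I_{nn}$ a positive real, shows \emph{a posteriori} that the eigenvalues are in fact real and positive. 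Second, your diagonal case presupposes the normalization $\|\tilde{\bm{\psi}}_n\|_{\L^2(\R^2;\H)}=1$, which in the paper's ordering is the content of the subsequent proposition, Eq. \eqref{Eq.QFMR2orth}, itself derived \emph{from} Eq. \eqref{Eq.QFMtau}; since eigenfunctions are only determined up to a constant factor this is merely the complementary choice of normalization convention and involves no circularity, but it deserves an explicit sentence.
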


\begin{Pro}[\textbf{Orthogonal in $\R^2$}]
The eigenfunctions $\{ \tilde{\bm{\psi}}_{n}(x,y)\}_{n=0}^{\infty}$ form an orthonormal system in $\R^2$, i.e.,
\begin{eqnarray}\label{Eq.QFMR2orth}
\int_{\R^2}\tilde{\bm{\psi}}_{n}(x,y)\overline{\bm{\tilde{\psi}}_{m}(x,y)}dxdy
= \left\{
\begin{array}{ll}
1, & m=n,\\
0,  &otherwise.
\end{array}\right.
\end{eqnarray}
\end{Pro}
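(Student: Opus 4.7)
The plan is to convert the $\R^2$-inner product into the already-known $\bm{\tau}$-inner product of Proposition \ref{Pro.Orthogonalintau} by exploiting the sinc kernel as a reproducing kernel for $\tilde{\bm{\psi}}_m$, which is exactly what the all-pass form (\ref{Eq.allpass}) provides. First I would take the low-pass form (\ref{Eq.lowpass}) for $\tilde{\bm{\psi}}_n$, right-multiply both sides by $\overline{\tilde{\bm{\psi}}_m(u,v)}$, and integrate in $(u,v)$ over $\R^2$. This produces $\mu_n\int_{\R^2}\tilde{\bm{\psi}}_n\overline{\tilde{\bm{\psi}}_m}\,du\,dv$ on the left and an iterated double integral on the right.

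Next I would swap the order of integration via Fubini (justified by the $\L^1$-integrability of $\tilde{\bm{\psi}}_n$ and the boundedness of the sinc factors on $\bm{\tau}$) and evaluate the inner integral in $(u,v)$. Because the two sinc factors are real-valued, they commute with every $\H$-valued factor, and the identity $\overline{\bm{q}\,r}=r\,\overline{\bm{q}}$ for $r\in\R$ lets me pull conjugation through the integral. The inner integral then simplifies as
\begin{equation*}
\int_{\R^2}\frac{\sin\sigma(x-u)}{\pi(x-u)}\frac{\sin\sigma(y-v)}{\pi(y-v)}\,\overline{\tilde{\bm{\psi}}_m(u,v)}\,du\,dv=\overline{\tilde{\bm{\psi}}_m(x,y)},
\end{equation*}
by applying conjugation to (\ref{Eq.allpass}) for $\tilde{\bm{\psi}}_m$ and using the evenness of the sinc. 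Substitution collapses the right-hand side to $\int_{\bm{\tau}}\tilde{\bm{\psi}}_n(x,y)\overline{\tilde{\bm{\psi}}_m(x,y)}\,dx\,dy$, which by Proposition \ref{Pro.Orthogonalintau} equals $\mu_n\delta_{mn}$.

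Combining these calculations yields $\mu_n\int_{\R^2}\tilde{\bm{\psi}}_n\overline{\tilde{\bm{\psi}}_m}\,du\,dv=\mu_n\delta_{mn}$, and since $|\mu_n|>0$ for every $n$ by Proposition \ref{Pro.Eigenvalues}, left-cancellation of the scalar $\mu_n$ gives the desired identity. I expect the main obstacle to be the non-commutative quaternionic bookkeeping: I need to verify carefully that the real-valued sinc factors may be moved past every quaternion component of $\tilde{\bm{\psi}}_m$, that quaternion conjugation distributes over an $\H$-valued Lebesgue integral (which follows componentwise from the scalar, $\mathbf{i}$-, $\mathbf{j}$- and $\mathbf{ij}$-parts), and that the possibly complex-valued eigenvalue $\mu_n$ may be pulled through the integral and cancelled from the left. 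Once these verifications are in place, the argument reduces to a clean double application of the two reproducing identities (\ref{Eq.lowpass}) and (\ref{Eq.allpass}) already established in this section.
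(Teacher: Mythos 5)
Your proof is correct and follows the same basic strategy as the paper's: convert the $\R^2$ inner product into the $\bm{\tau}$ inner product of Proposition \ref{Pro.Orthogonalintau} by exploiting the reproducing property of the sinc kernel, and then cancel the eigenvalue. The one genuine difference is the middle step. The paper substitutes the low-pass representation $\frac{1}{\mu}\int_{\bm{\tau}}(\cdot)\frac{\sin\sigma(\cdot)}{\pi(\cdot)}\frac{\sin\sigma(\cdot)}{\pi(\cdot)}$ for \emph{both} $\tilde{\bm{\psi}}_{n}$ and $\tilde{\bm{\psi}}_{m}$ and then collapses the resulting product of four sinc factors using the (unstated) self-reproducing identity $\int_{\R}\frac{\sin\sigma(x-s)}{\pi(x-s)}\frac{\sin\sigma(x-u)}{\pi(x-u)}\,dx=\frac{\sin\sigma(s-u)}{\pi(s-u)}$, after which it reapplies Eq. (\ref{Eq.lowpass}) once more; you instead substitute on one side only and absorb the sinc factors into $\overline{\tilde{\bm{\psi}}_{m}}$ directly via the conjugated, evenness-adjusted all-pass form (\ref{Eq.allpass}). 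Your route is slightly cleaner in that it invokes a proposition already proved in this section rather than an unreferenced convolution identity, and it requires only one Fubini interchange. Both versions share the same implicit conventions — that the real sinc factors commute with quaternion-valued integrands, that conjugation distributes over the $\H$-valued integral, and that the possibly complex scalar $\mu_n$ may be pulled through and cancelled — and your appeal to the strict monotonicity in Proposition \ref{Pro.Eigenvalues} to guarantee $\mu_n\neq 0$ is exactly the justification the paper leaves tacit.
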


\begin{proof}
Combining Eq. (\ref{Eq.lowpass}), the orthogonality in $\R^2$ can be immediately deduced as follows
\begin{eqnarray*}
 &&\int_{\R^2}\tilde{\bm{\psi}}_{n}(x,y)\overline{\tilde{\bm{\psi}}_{m}(x,y)}dxdy\\
&=&\int_{\R^2}
\left(\frac{1}{\mu_n}\int_{\bm{\tau}}\tilde{\bm{\psi}}_{n}(s,t)
\frac{\sin \sigma (x-s)}{\pi (x-s)}\frac{\sin \sigma (y-t)}{\pi (y-t)}dsdt\right)\\
&&~~~~~~~
\overline{\left(\frac{1}{\mu_m}\int_{\bm{\tau}}\tilde{\bm{\psi}}_{m}(u,v)
\frac{\sin \sigma (x-u)}{\pi (x-u)}\frac{\sin \sigma (y-v)}{\pi (y-v)}dudv\right)}dxdy\\
&=&\frac{1}{\mu_n\mu_m}\int_{\bm{\tau}}\int_{\bm{\tau}}
\tilde{\bm{\psi}}_{n}(s,t)\overline{\tilde{\bm{\psi}}_{m}(u,v)}dsdtdudv\\
&&
\left(\int_{\R^2}
 \frac{\sin \sigma (x-s)}{\pi (x-s)}\frac{\sin \sigma (y-t)}{\pi (y-t)}
 \frac{\sin \sigma (x-u)}{\pi (x-u)}\frac{\sin \sigma (y-v)}{\pi (y-v)}dxdy\right) \\
 &=&\frac{1}{\mu_n\mu_m} \int_{\bm{\tau}}\left(\int_{\bm{\tau}}
\tilde{\bm{\psi}}_{n}(s,t)
\frac{\sin \sigma (s-u)}{\pi (s-u)}\frac{\sin \sigma (t-v)}{\pi (t-v)}dsdt\right)
\overline{\tilde{\bm{\psi}}_{m}(u,v)}dudv\\
&=&\frac{1}{\mu_m}\int_{\bm{\tau}}\tilde{\bm{\psi}}_{n}(u,v)\overline{ \tilde{\bm{\psi}}_{m}(u,v)}dudv
=\left\{
\begin{array}{ll}
1, & m=n,\\
0,  &otherwise.
\end{array}\right.
\end{eqnarray*}

\end{proof}

\section{Main Results}
\label{S5}
In the present section, we will consider the energy concentration problem of bandlimited
$\H$-valued signals in fixed spatial and QLCTs-frequency domains.
The definitions and notations of bandlimited $\H$-valued signals associated with QLCTs and QFT are introduced in the following.

\begin{Def}[\textbf{$\bm{\sigma}$-bandlimited $\H$-valued signal associated with QLCTs}]

an $\H$-valued signal $\bm{f}(x,y)$ with finite energy is $\bm{\sigma}$-bandlimited  associated with QLCTs,
 if its QLCTs vanishes for all $(u,v)$ outside the region $\bm{\sigma}$, i.e.,
 \begin{eqnarray}
 \L(\bm{f})(u,v)=0, ~~~\mathrm{for}~~ (u,v)\in  \R^2\setminus\bm{\sigma}.
\end{eqnarray}
Denote $\mathscr{B}_{\bm{\sigma}}$ the set of $\bm{\sigma}$-bandlimited $\H$-valued signals  associated with QLCTs, i.e.,
 \begin{eqnarray}
 \mathscr{B}_{\bm{\sigma}}:=\left\{f\in \mathcal{L}^2(\R^2;\H)\Big| \textrm{supp}(\L(\bm{f}(u,v)))\in \bm{\sigma}\right\}.
\end{eqnarray}
\end{Def}

\begin{Def}[\textbf{$\bm{\sigma}$-bandlimited $\H$-valued signal  associated with QFT}]
an $\H$-valued signal $\bm{f}(x,y)$ with finite energy is $\bm{\sigma}$-bandlimited  associated with QFT,
 if its QFT vanishes for all $(u,v)$ outside the region $\bm{\sigma}$, i.e.,
 \begin{eqnarray}
 \F(\bm{f})(u,v)=0, ~~~\mathrm{for}~~ (u,v)\in  \R^2\setminus\bm{\sigma}.
\end{eqnarray}
Denote $\tilde{\mathscr{B}}_{\bm{\sigma}}$ the set of the $\bm{\sigma}$-bandlimited $\H$-valued signals  associated with QFT, i.e.,
\begin{eqnarray}
\tilde{\mathscr{B}}_{\bm{\sigma}}:=\left\{f\in \mathcal{L}^2(\R^2;\H)\Big| \textrm{supp}(\F(\bm{f}(u,v)))\in \bm{\sigma}\right\}.
\end{eqnarray}
\end{Def}

Note that the relationship between QLCT and QFT for an $\H$-valued signal $\bm{f}$
\begin{eqnarray*}
\F (\tilde{\bm{f}}) \left(\frac{u}{b_1},\frac{v}{b_2}\right)
=2\pi \sqrt{b_1\mathbf{i}} e^{\mathbf{-i}\frac{d_1}{2b_1}u^2}
\L(\bm{f})(u,v)e^{\mathbf{-j}\frac{d_2}{2b_2}v^2}\sqrt{b_2\mathbf{j}}.
\end{eqnarray*}
That is to say if $\bm{f}\in \mathscr{B}_{\bm{\sigma}}$, then for the
 $\F (\tilde{\bm{f}}) \left(\frac{u}{b_1},\frac{v}{b_2}\right)$, $(u,v)$ is also in $\bm{\sigma}$, that means
 \begin{eqnarray}
(\frac{u}{b_1},\frac{v}{b_2})\in [\frac{-\sigma}{b_1},\frac{\sigma}{b_1}]\times [\frac{-\sigma}{b_2},\frac{\sigma}{b_2}]=:
 \tilde{\bm{\sigma}}.
 \end{eqnarray}
Then $\tilde{\bm{f}}\in \tilde{\mathscr{B}}_{\tilde{\bm{\sigma}}}$, because
\begin{eqnarray}
\F (\tilde{\bm{f}}) (u,v)=0, ~~~\mathrm{for}~~
 (u,v)\in  \R^2\setminus\tilde{\bm{\sigma}}.
\end{eqnarray}

Now we  pay attention to  the energy concentration problem  associated with QLCTs.
To be specific, the energy concentration problem  associated with QLCTs  aims to obtain the relationship of the following two energy ratios for
any $\H$-valued signal $\bm{f}$ with finite energy in a fixed spatial and QLCTs-frequency domains, i.e., $\bm{\tau}$ and $\bm{\sigma}$,
\begin{eqnarray}\label{Eq.extremal}
\alpha_{\bm{f}} :=\frac{\parallel p_{\bm{\tau}}\bm{f} \parallel^2}{\parallel \bm{f} \parallel^2}
~~~\mathrm{and}~~~
\beta_{\bm{f}} :=\frac{\parallel p_{\bm{\sigma}}\L(\bm{f}) \parallel^2}{\parallel \L(\bm{f}) \parallel^2}.
\end{eqnarray}
By the Parseval identity in Eq. (\ref{Eq.ParsevalQFT}),
the two ratios can also be obtained by
\begin{eqnarray}
\alpha_{\bm{f}} =\frac{\parallel p_{\bm{\tau}}\tilde{\bm{f}} \parallel^2}{\parallel \tilde{\bm{f}} \parallel^2}
~~~\mathrm{and}~~~
\beta_{\bm{f}} =\frac{\parallel p_{\tilde{\bm{\sigma}}}\mathcal{F} (\tilde{\bm{f}}) \parallel^2}{\parallel \mathcal{F} (\tilde{\bm{f}}) \parallel^2}.
\end{eqnarray}
Note that the value of $\alpha_{\bm{f}}$ and $\beta_{\bm{f}} $ are real values  in $[0,1]$.

\subsection{Energy Concentration Problem for $\sigma$-Bandlimited Signals}
 \label{S5.1}
In this part, we only consider the energy problem for $\bm{f} \in \mathscr{B}_{\bm{\sigma}}$, i.e., $\beta_{\bm{f}}=1$ and
$\tilde{\bm{f}}\in \tilde{\mathscr{B}}_{\tilde{\bm{\sigma}}}$.
Concretely speaking, given an unit energy $\tilde{\bm{f}}\in \tilde{\mathscr{B}}_{\tilde{\bm{\sigma}}}$,
the energy concentration problem is finding the maximum of $\alpha_{\bm{f}}$, i.e.,
\begin{eqnarray}
\max_{\tilde{\bm{f}}(x,y)\in \tilde{\mathscr{B}}_{\tilde{\bm{\sigma}}}}
  \alpha_{\bm{f}} =\parallel p_{\bm{\tau}}\tilde{\bm{f}} \parallel^2.
\end{eqnarray}
Denote the maximum $\alpha_{\bm{f}}$ as follows
\begin{eqnarray}
\alpha_{max}:=\max_{\tilde{\bm{f}}(x,y)\in \tilde{\mathscr{B}}_{\tilde{\bm{\sigma}}}} \alpha_{\bm{f}}.
\end{eqnarray}

Let $\bm{\tilde{f}}_{\bm{\tau}}(x,y):=p_{\bm{\tau}}(x,y)\bm{\tilde{f}}(x,y)$, we can also reformulate $\alpha_{\bm{f}}$ as follows
\begin{eqnarray}
\alpha_{\bm{f}} =\int_{\R^2}\bm{\tilde{f}}_{\bm{\tau}}(x,y)\overline{\bm{\tilde{f}}(x,y)}dxdy.
\end{eqnarray}
We conclude that the maximum $\alpha_{max}$ can be taken if $\bm{\tilde{f}}_{\bm{\tau}}(x,y)=\mu  \bm{\tilde{f}}(x,y)$.
To derive this fact, the generally cross-correlation function $\rho_{fg}$ of $f$ and $g\in \mathcal{L}^2(\R^2;\C)$ was introduced at first \cite{P1977},
\begin{eqnarray}
\rho_{fg}(s,t):=\int_{\R^2}\overline{f(x,y)}g(s+x,t+y)dxdy.
\end{eqnarray}
Consider the $(s,t)=(0,0)$, we have
$
\rho_{fg}(0,0)=\int_{\R^2}\overline{f(x,y)}g(x,y)dxdy.
$
From the complex-valued Schwarz's inequality,
\begin{eqnarray}
\left| \int_{\R^2}\overline{f(x,y)}g(x,y)dxdy\right|^2\leq \int_{\R^2}\left|f(x,y)\right|^2dxdy\int_{\R^2}\left|g(x,y)\right|^2dxdy.
\end{eqnarray}
the $|\rho_{fg}(0,0)|^2$ takes the maximum value if $f(x,y)=kg(x,y)$, where $k$ is a constant.
Similarly, we can define the cross-correlation function $\rho_{\bm{fg}}$ of $\H$-valued signals $\bm{f}$ and $\bm{g}\in \mathcal{L}^2(\R^2;\H)$ as follows
\begin{eqnarray}
\rho_{\bm{f}\bm{g}}(s,t):=\int_{\R^2}\overline{\bm{f}(x,y)}\bm{g}(s+x,t+y)dxdy.
\end{eqnarray}
Since the quaternionic Schwarz's inequality also holds.
Then to get the maximum value of $\rho_{\bm{f}\bm{g}}(0,0)$,
the relationship between $\bm{f}$ and $\bm{g}$ satisfies  $\bm{f}(x,y)=\gamma\bm{g}(x,y)$, where $\gamma$ is a constant.
Here, we find that
\begin{eqnarray}
\alpha_{\bm{f}}=\rho_{\bm{\tilde{f}}_{\bm{\tau}}\bm{\tilde{f}}}(0,0).
\end{eqnarray}
To achieve the maximum $\alpha_{\bm{f}}$, the two functions should be the same except a constant factor.
For this reason, there exists a constant $\mu$ such that
$\bm{\tilde{f}}_{\bm{\tau}}(x,y)=\mu  \bm{\tilde{f}}(x,y)$.

Let  $\F(\bm{\tilde{f}}_{\bm{\tau}})$ and $\F(\bm{\tilde{f}})$ are the QFT for $\bm{\tilde{f}}_{\bm{\tau}}$ and $\bm{\tilde{f}}$, respectively.
Taking QFT to both sides of the equation $\bm{\tilde{f}}_{\bm{\tau}}(x,y)=\mu  \bm{\tilde{f}}(x,y)$, we have
\begin{eqnarray}
\F(\bm{\tilde{f}}_{\bm{\tau}})(u,v)=\mu \F(\bm{\tilde{f}})(u,v).%p_{\bm{\sigma}}(u,v).
\end{eqnarray}
Since $\bm{\tilde{f}}\in\tilde{\mathscr{B}}_{\tilde{\bm{\sigma}}}$, then $\bm{\tilde{f}}_{\bm{\tau}}(x,y)$ is also in $\tilde{\mathscr{B}}_{\tilde{\bm{\sigma}}}$, i.e.,
\begin{eqnarray}\label{Eq.QFTconvl}
\F(\bm{\tilde{f}}_{\bm{\tau}})(u,v)p_{\tilde{\bm{\sigma}}}(u,v)=\mu \F(\bm{\tilde{f}})(u,v).
\end{eqnarray}
From Lemma \ref{Lem.convolution}, taking the inverse QFT to the above equation, we have
\begin{eqnarray}\label{Eq.convl}
\int_{\R^2}\bm{\tilde{f}}_{\bm{\tau}}(s,t)\frac{\sin \sigma (x-s)}{\pi (x-s)}
\frac{\sin \sigma (y-t)}{\pi (y-t)}dsdt=\mu \bm{\tilde{f}}(x,y).
\end{eqnarray}
Substituting $\bm{\tilde{f}}_{\bm{\tau}}(s,t)=p_{\bm{\tau}}(s,t)\bm{\tilde{f}}(s,t)$ to the above equation, we have
\begin{eqnarray}\label{Eq.low}
\int_{\bm{\tau}}\bm{\tilde{f}}(s,t)\frac{\sin \sigma (x-s)}{\pi (x-s)}
\frac{\sin \sigma (y-t)}{\pi (y-t)}dsdt=\mu \bm{\tilde{f}}(x,y),
\end{eqnarray}
which is the low-pass filter form of QPSWFs.

Now we show that $\bm{\sigma}$-bandlimited $\H$-valued signals satisfying
the low-pass filter form Eq. (\ref{Eq.low}) can reach the maximum $\alpha_{max}$.

\begin{Theorem}\label{Th.TheoremExist}
If the eigenvalues of the integral equation
\begin{eqnarray}\label{Eq.Exist}
\int_{\bm{\tau}}\bm{\tilde{f}}(s,t)\frac{\sin \sigma (x-s)}{\pi (x-s)}
\frac{\sin \sigma (y-t)}{\pi (y-t)}dsdt=\mu \bm{\tilde{f}}(x,y),
\end{eqnarray}
have a maximum $\mu$, then $\alpha_{max}=\mu _{max}$.
The eigenfunction corresponding to $\mu_{max}$
is the function such that $\alpha_{max}$ are reached.
\end{Theorem}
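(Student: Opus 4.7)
The strategy is to combine two ingredients: (i) the variational characterisation already sketched in the paragraphs preceding the statement, which shows that any maximiser must solve the integral equation (\ref{Eq.Exist}); and (ii) an eigenvalue identity $\alpha_{\bm f}=\mu$ for every eigenfunction of that equation. Once both are in hand, the claim follows by comparing maxima.

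First, I would make precise the derivation leading from the Schwarz-inequality argument above to the eigenvalue equation. Given a unit-energy $\tilde{\bm f}\in\tilde{\mathscr{B}}_{\tilde{\bm\sigma}}$ realising $\alpha_{max}$, the quaternionic Schwarz inequality applied to $\rho_{\tilde{\bm f}_{\bm\tau},\tilde{\bm f}}(0,0)$ forces $\tilde{\bm f}_{\bm\tau}(x,y)=\mu\,\tilde{\bm f}(x,y)$ for some scalar $\mu$. Taking the QFT of both sides, multiplying by $p_{\tilde{\bm\sigma}}$ using the bandlimiting of $\tilde{\bm f}$, and inverting via Lemma \ref{Lem.convolution} then reproduces exactly (\ref{Eq.Exist}). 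In particular, every candidate maximiser is one of the QPSWFs appearing in the low-pass form (\ref{Eq.allpass}).

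Second, and this is the crux, I would establish the identity $\alpha_{\bm f}=\mu$ for any eigenfunction of (\ref{Eq.Exist}). The method: form the quaternion inner product of both sides of the eigenvalue equation with $\tilde{\bm f}$ over $\R^2$. The right-hand side delivers $\mu\|\tilde{\bm f}\|^2$, since $\mu$ is a scalar factor. For the left-hand side I would swap the order of integration (Fubini is valid because the sinc kernel is bounded on $\bm\tau$ and $\tilde{\bm f}\in\L^2$) and then exploit the fact that the two-dimensional sinc kernel is real, even, and is the inverse QFT of $p_{\bm\sigma}$. Invoking the version of Lemma \ref{Lem.convolution} for the even real scalar kernel, the inner $\R^2$-integral collapses to $\overline{\tilde{\bm f}(s,t)}$ by the reproducing property (\ref{Eq.allpass}), so the left-hand side equals $\int_{\bm\tau}\tilde{\bm f}(s,t)\overline{\tilde{\bm f}(s,t)}\,ds\,dt=\|p_{\bm\tau}\tilde{\bm f}\|^2$. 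Dividing by $\|\tilde{\bm f}\|^2$ yields $\alpha_{\bm f}=\mu$.

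Finally, combining the two steps: every admissible maximiser is an eigenfunction with eigenvalue at most $\mu_{max}$, giving $\alpha_{max}\le\mu_{max}$; and conversely the eigenfunction attached to $\mu_{max}$ is automatically $\bm\sigma$-bandlimited because its defining equation expresses it as a sinc-convolution (cf.\ (\ref{Eq.FfPsigma})), and it attains $\alpha=\mu_{max}$, whence equality. The main obstacle I anticipate is the non-commutative bookkeeping in the inner-product step: moving the real scalar sinc kernel past $\overline{\tilde{\bm f}}$, checking that taking conjugates commutes with the reproducing integral, and verifying that (\ref{Eq.allpass})---proved for the QPSWFs $\tilde{\bm\psi}_n$---applies verbatim to an arbitrary bandlimited eigenfunction. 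This reduces to checking that convolution with the even real kernel respects the symmetric decomposition (\ref{Eq.4partsHfunction}) of $\tilde{\bm f}$, which is precisely the content of the remark following Lemma \ref{Lem.convolution}.
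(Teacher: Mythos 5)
Your overall architecture is genuinely different from the paper's. The paper proves the theorem by the classical Slepian--Pollak iteration: it builds $\bm{\tilde{s}}=\left(p_{\bm{\tau}}\bm{\tilde{f}}\right)\ast\frac{\sin\sigma x}{\pi x}\frac{\sin\sigma y}{\pi y}$ and uses two Schwarz inequalities (one in the frequency domain, one in the spatial domain after splitting $p_{\bm{\tau}}=\big(\sqrt{p_{\bm{\tau}}}\big)^2$) to get $\alpha_{\bm{f}}^2E_{\bm{f}}\leq E_{\bm{s}}\leq\alpha_{\bm{f}}\alpha_{\bm{s}}E_{\bm{f}}$, hence $\alpha_{\bm{f}}\leq\alpha_{\bm{s}}$ with equality exactly when $\bm{\tilde{f}}$ is an eigenfunction of (\ref{Eq.Exist}). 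Your second step --- pairing both sides of (\ref{Eq.Exist}) with $\bm{\tilde{f}}$, using Fubini, the realness of the sinc kernel and the reproducing property (\ref{Eq.allpass}) to obtain $\mu\|\bm{\tilde{f}}\|^2=\|p_{\bm{\tau}}\bm{\tilde{f}}\|^2$, i.e.\ $\alpha_{\bm{f}}=\mu$ --- is correct, and is in fact cleaner than anything the paper writes down; it also shows at a stroke that the relevant eigenvalues are real and lie in $[0,1]$.

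The gap is in your first step. You claim that at a maximiser the quaternionic Schwarz inequality applied to $\rho_{\bm{\tilde{f}}_{\bm{\tau}}\bm{\tilde{f}}}(0,0)$ \emph{forces} $\bm{\tilde{f}}_{\bm{\tau}}(x,y)=\mu\,\bm{\tilde{f}}(x,y)$. This cannot happen: $p_{\bm{\tau}}$ takes only the values $0$ and $1$, so $p_{\bm{\tau}}\bm{\tilde{f}}=\mu\bm{\tilde{f}}$ pointwise would force a nonzero bandlimited $\bm{\tilde{f}}$ to vanish identically outside $\bm{\tau}$, which is impossible. The Schwarz equality case is simply not the right variational characterisation here, because in $\alpha_{\bm{f}}=\rho_{\bm{\tilde{f}}_{\bm{\tau}}\bm{\tilde{f}}}(0,0)$ the two arguments are not independent --- one is a deterministic function of the other --- so Schwarz only yields the vacuous bound $\alpha_{\bm{f}}\leq\sqrt{\alpha_{\bm{f}}}$. (The paper's own paragraph before the theorem commits the same sleight of hand, but its actual proof does not rely on it.) What is true, and what you need, is the \emph{projected} stationarity condition: a maximiser of the Rayleigh quotient $\langle p_{\bm{\tau}}\bm{\tilde{f}},\bm{\tilde{f}}\rangle/\|\bm{\tilde{f}}\|^2$ over the subspace $\tilde{\mathscr{B}}_{\tilde{\bm{\sigma}}}$ satisfies $Q\big(p_{\bm{\tau}}\bm{\tilde{f}}\big)=\mu\bm{\tilde{f}}$, where $Q$ is the band-limiting projection (convolution with the sinc kernel); that \emph{is} Eq.\ (\ref{Eq.Exist}), but to establish it you must run a genuine first-variation argument (perturb by a bandlimited $\bm{h}$ and use that ${\bf Sc}\langle p_{\bm{\tau}}\bm{\tilde{f}}-\mu\bm{\tilde{f}},\bm{h}\rangle=0$ for all such $\bm{h}$), or invoke the spectral theorem for the compact self-adjoint operator $Qp_{\bm{\tau}}Q$, or fall back on the paper's iteration. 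You also tacitly assume a maximiser exists; either address this (compactness of $Qp_{\bm{\tau}}Q$) or replace the whole step by the expansion $\bm{\tilde{f}}=\sum_n a_n\tilde{\bm{\psi}}_n$, which gives $\alpha_{\bm{f}}=\sum_n\mu_n|a_n|^2\leq\mu_0$ directly and combines with your step two to finish the proof without any existence claim.
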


\begin{proof}
For any $\bm{\sigma}$-bandlimited signal $\bm{\tilde{f}}$, construct a function $\bm{\tilde{s}}(x,y)$  as follows
\begin{eqnarray}\label{Eq.s}
\bm{\tilde{s}}(x,y):=\int_{\bm{\tau}}\bm{\tilde{f}}(s,t)\frac{\sin \sigma (x-s)}{\pi (x-s)}\frac{\sin \sigma (y-t)}{\pi (y-t)}dsdt.
\end{eqnarray}
Let the QFT of $\bm{\tilde{s}}(x,y)$  as $\F(\bm{\tilde{s}})(u,v)$,
it follows that
\begin{eqnarray*}
\F(\bm{\tilde{s}})(u,v)=
\F(\bm{\tilde{f}}_{\bm{\tau}})(u,v) p_{\tilde{\bm{\sigma}}}(u,v).
\end{eqnarray*}
It means that $\bm{\tilde{s}}\in \tilde{\mathscr{B}}_{\tilde{\bm{\sigma}}}$.

Denote the energy ratio  $\alpha_{\bm{s}}$ for $\bm{\tilde{s}}(x,y)$   in the fixed spatial domain $\bm{\tau}$ as follows
\begin{eqnarray}
\alpha_{\bm{s}}=\frac{1}{E_{\bm{s}}}\int_{\R^2}p_{\bm{\tau}}(x,y)\bm{\tilde{s}}(x,y)\overline{\bm{\tilde{s}}(x,y)}dxdy.
\end{eqnarray}
We conclude that for any $\bm{\tilde{f}}\in \tilde{\mathscr{B}}_{\tilde{\bm{\sigma}}}$, $\alpha_{\bm{f}}$ cannot exceed the $\alpha_{\bm{s}}$.
Direct computations show that the energy of  the signal $\bm{\tilde{s}}(x,y)$ is given as follows
\begin{eqnarray*}
E_{\bm{s}}&=&\int_{\R^2}\bm{\tilde{s}}(x,y)\overline{\bm{\tilde{s}}(x,y)}dxdy\\
&=&\int_{\R^2}\F(\bm{\tilde{s}})(u,v)
\overline{\F(\bm{\tilde{s}})(u,v)}dudv\\
&=&{\bf Sc} \left[\int_{\tilde{\bm{\sigma}}}\F(\bm{\tilde{f}}_{\bm{\tau}})(u,v)
\overline{\F(\bm{\tilde{s}})(u,v)}dudv\right]\\
&=&{\bf Sc} \left[ \int_{\R^2}\bm{\tilde{f}}_{\bm{\tau}}(x,y)\bm{\tilde{s}}(x,y)dxdy \right]\\
&=&{\bf Sc} \left[ \int_{\R^2}p_{\bm{\tau}}(x,y)\bm{\tilde{f}}(x,y)\overline{\bm{\tilde{s}}(x,y)}dxdy\right].
\end{eqnarray*}

On the other hand, we consider that
\begin{eqnarray*}
\alpha_{\bm{f}}E_{\bm{f}}&=&\int_{\R^2}\bm{\tilde{f}}_{\bm{\tau}}(x,y)\overline{\bm{\tilde{f}}(x,y)}dxdy\\
&=&{\bf Sc} \left[ \int_{\R^2}\F(\bm{\tilde{f}}_{\bm{\tau}})(u,v)
\overline{\F(\bm{\tilde{f}})(u,v)}dudv\right]\\
&=&{\bf Sc} \left[ \int_{\tilde{\bm{\sigma}}}\F(\bm{\tilde{s}})(u,v)
\overline{\F(\bm{\tilde{f}})(u,v)}dudv \right].
\end{eqnarray*}
Since
\begin{eqnarray} \label{Eq.1}
\left|\int_{\tilde{\bm{\sigma}}}\F(\bm{\tilde{s}})(u,v)
\overline{\F(\bm{\tilde{f}})(u,v)}dudv\right|^2
\leq \parallel p_{\tilde{\bm{\sigma}}}\mathcal{F} (\tilde{\bm{s}}) \parallel^2\parallel p_{\tilde{\bm{\sigma}}}\mathcal{F} (\tilde{\bm{f}}) \parallel^2,
\end{eqnarray}
and
\begin{eqnarray*}
\left| {\bf Sc} \left[ \int_{\tilde{\bm{\sigma}}}\F(\bm{\tilde{s}})(u,v)
\overline{\F(\bm{\tilde{f}})(u,v)}dudv \right] \right|^2
\leq
\left|\int_{\tilde{\bm{\sigma}}}\F(\bm{\tilde{s}})(u,v)
\overline{\F(\bm{\tilde{f}})(u,v)}dudv\right|^2,
\end{eqnarray*}
simplifying the above three inequalities, we obtain that
\begin{eqnarray*}
(\alpha_{\bm{f}}E_{\bm{f}})^2\leq E_{\bm{s}}E_{\bm{f}},
\end{eqnarray*}
 from which it follows that
\begin{eqnarray*}
 \alpha_{\bm{f}}^2E_{\bm{f}}\leq E_{\bm{s}}.
\end{eqnarray*}
We also have the following result for $E_{\bm{s}}$
\begin{eqnarray}\label{Eq.2}
E_{\bm{s}}^2&=&
\left|{\bf Sc} \left[ \int_{\R^2}p_{\bm{\tau}}(x,y)\bm{\tilde{f}}(x,y)\overline{\bm{\tilde{s}}(x,y)}dxdy\right]\right|^2\\  \nonumber
&\leq&
\left|\int_{\R^2}p_{\bm{\tau}}(x,y)\bm{\tilde{f}}(x,y)\overline{\bm{\tilde{s}}(x,y)}dxdy\right|^2\\  \nonumber
&\leq& \int_{\R^2}p_{\bm{\tau}}(x,y)\bm{\tilde{f}}(x,y)\overline{\bm{\tilde{f}}(x,y)}dxdy
\int_{\R^2}p_{\bm{\tau}}(x,y)\bm{\tilde{s}}(x,y)\overline{\bm{\tilde{s}}(x,y)}dxdy.
\end{eqnarray}
Here, we take $p_{\bm{\tau}}(x,y)$ into two parts, i.e.,  $\Big(\sqrt{p_{\bm{\tau}}(x,y)}\Big)^2$, and use the Schwarz inequality for the above inequality.
Clearly,  $(E_{\bm{s}})^2\leq (\alpha_{\bm{f}}E_{\bm{f}})(\alpha_{\bm{s}}E_{\bm{s}})$, then
\begin{eqnarray*}
E_{\bm{s}}\leq \alpha_{\bm{f}}\alpha_{\bm{s}}E_{\bm{f}}.
\end{eqnarray*}
Summarizing, we have
\begin{eqnarray*}
 \alpha_{\bm{f}}^2E_{\bm{f}}\leq E_{\bm{s}}\leq \alpha_{\bm{f}}\alpha_{\bm{s}}E_{\bm{f}}.
\end{eqnarray*}
That means, for any $\bm{\tilde{f}} \in \tilde{\mathscr{B}}_{\tilde{\bm{\sigma}}}$, $\alpha_{\bm{f}}\leq \alpha_{\bm{s}}$.

If  $\alpha_{\bm{f}}=\alpha_{\bm{s}}$, then  Eq. (\ref{Eq.1}) and Eq. (\ref{Eq.2}) must be equalities.
This  is attained only by setting $\bm{\tilde{s}}(x,y)= \alpha_{\bm{f}}\bm{\tilde{f}}(x,y)$ with
 $E_{\bm{s}}= \alpha_{\bm{f}}^2E_{\bm{f}}$. %It follows that if $\alpha_{\bm{f}}=\alpha_{\bm{s}}$, then
 It means that
 $\bm{\tilde{f}}$ is an eigenfunction of Eq. (\ref{Eq.Exist})
 and $\alpha_{\bm{f}}$ is the corresponding eigenvalue, i.e.,  $\mu=\alpha_{\bm{f}}$.

At last, we will show that $\alpha_{max}=\mu _{max}$, and the eigenfunction corresponding to $\mu_{max}$
is the function such that $\alpha_{max}$ is reached.
By definition of $0\leq\alpha_{\bm{f}}\leq1$, there exists a maximum $\alpha_{f}$ and we denote the
maximum $\alpha_{\bm{f}}$ as $\alpha_{max}$ and the corresponding signal as $\bm{\tilde{f}}_{0}(x,y)$.
As we have shown, the $\alpha_{max}$  corresponding the eigenfunction satisfies $\bm{\tilde{s}}(x,y)= \alpha_{\bm{f}}\bm{\tilde{f}}(x,y)$.
Here, $\bm{\tilde{s}}(x,y)= \alpha_{\bm{f}}\bm{\tilde{f}}(x,y)=\alpha_{\bm{f}}\bm{\tilde{f}}_{0}(x,y)$ corresponds to the maximum eigenvalue of $\mu_{max}$.
Hence, $\mu_{max}\leq\alpha_{max}$.

In order to  prove that $\mu_{max}=\alpha_{max}$, it suffices to show that
$\bm{\tilde{f}}_{0}(x,y)$ is an eigenfunction of the integral equation Eq. (\ref{Eq.Exist}),
or equivalently, that with $\bm{\tilde{S}}_0(x,y)$ defined as $\bm{\tilde{s}}(x,y)$ in Eq. (\ref{Eq.s}) with $\alpha_{\bm{S}_0}=\alpha_{max}$.
Obviously, $\alpha_{\bm{S}}\geq\alpha_{max}$ and
$\alpha_{\bm{S}_0}\leq\alpha_{max}$,  because $\alpha_{max}$ is maximum by assumption.
The proof is complete.
\end{proof}

Theorem \ref{Th.TheoremExist} shows that
for arbitrary unit energy $\bm{\sigma}$-bandlimited $\H$-valued signal associated with QLCTs
 the maximum value of $\alpha_{\bm{f}}$ can be achieved by the QPSWFs.
 In fact, from the symmetry theorem of Fourier theory \cite{P1977},
 there is also a similar  integral equation for time-limited signals,
 which have the maximum $\beta_{\tilde{\bm{f}}}$.
 The prove of this conclusion is similar to Theorem \ref{Th.TheoremExist}.
\begin{Cor}
If the eigenvalues of the integral equation
\begin{eqnarray}\label{Eq.Fptauf}
\int_{\tilde{\bm{\sigma}}}\F(\bm{\tilde{f}})(u,v)\frac{\sin \tau (x-u)}{\pi (x-u)}
\frac{\sin \tau(y-v)}{\pi (y-v)}dudv
=\mu\F(\bm{\tilde{f}})(x,y).
\end{eqnarray}
have a maximum $\mu$, then $\beta_{\tilde{\bm{f}}}$ have a maximum number $\beta_{max}$ and $\beta_{max}=\mu _{max}$.
The eigenfunction corresponding to $\mu_{max}$
is the function such that $\beta_{max}$ are reached.
\end{Cor}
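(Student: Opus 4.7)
The plan is to mirror the proof of Theorem \ref{Th.TheoremExist} on the QFT-frequency side, exploiting the duality between the spatial and QFT-frequency domains. In Theorem \ref{Th.TheoremExist} we fixed bandlimited signals and sought the maximum spatial concentration in $\bm{\tau}$; here the roles of $\bm{\tau}$ and $\tilde{\bm{\sigma}}$ are exchanged, and among time-limited signals (supported in $\bm{\tau}$) we seek the signal whose QFT is maximally concentrated in $\tilde{\bm{\sigma}}$. First I would reduce to the case $\bm{\tilde{f}}=p_{\bm{\tau}}\bm{\tilde{f}}$, so that $\alpha_{\bm{f}}=1$, and then work with $\F(\bm{\tilde{f}})$ as the primary variable, treating Eq.~\eqref{Eq.Fptauf} as a low-pass filter form on the QFT side.

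Next, for an arbitrary time-limited $\bm{\tilde{f}}$, I would introduce the auxiliary function
$$\bm{\tilde{S}}(x,y) := \int_{\tilde{\bm{\sigma}}} \F(\bm{\tilde{f}})(u,v)\,\frac{\sin \tau(x-u)}{\pi(x-u)}\,\frac{\sin \tau(y-v)}{\pi(y-v)}\,du\,dv,$$
which is the QFT-analogue of the signal $\bm{\tilde{s}}$ built in Eq.~(\ref{Eq.s}). By Lemma~\ref{Lem.convolution} and the Fourier duality between $p_{\bm{\tau}}$ and the sinc kernel, $\bm{\tilde{S}}$ equals $p_{\tilde{\bm{\sigma}}}$ times $\F(\bm{\tilde{F}}_{\tilde{\bm{\sigma}}})$ after a change of variables, so it automatically lives in the correct domain for comparison. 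Denoting by $E_{\bm{\tilde{F}}}$ and $E_{\bm{\tilde{S}}}$ the total energies (in the QFT-frequency domain) of $\F(\bm{\tilde{f}})$ and $\bm{\tilde{S}}$, I would establish
$$\beta_{\bm{f}}^{2}E_{\bm{\tilde{F}}}\;\le\;E_{\bm{\tilde{S}}}\;\le\;\beta_{\bm{f}}\,\beta_{\bm{S}}\,E_{\bm{\tilde{F}}}$$
by two applications of the quaternionic Schwarz inequality, exactly paralleling the chain leading to Eq.~(\ref{Eq.2}) but with $(\bm{\tau},\tilde{\bm{\sigma}})$ swapped. This yields $\beta_{\bm{f}}\le\beta_{\bm{S}}$ for every admissible $\bm{\tilde{f}}$, and equality forces $\bm{\tilde{S}}=\beta_{\bm{f}}\,\F(\bm{\tilde{f}})$, which, reading off the kernel, is precisely Eq.~(\ref{Eq.Fptauf}) with $\mu=\beta_{\bm{f}}$.

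Finally, let $\bm{\tilde{f}}_{0}$ attain the (assumed) supremum $\beta_{\max}$. Applying the previous step to $\bm{\tilde{f}}_{0}$ and invoking maximality shows that $\F(\bm{\tilde{f}}_{0})$ must satisfy Eq.~(\ref{Eq.Fptauf}) with eigenvalue $\beta_{\max}$, hence $\beta_{\max}\le\mu_{\max}$; conversely, substituting an eigenfunction associated with $\mu_{\max}$ into the definition of $\beta_{\bm{f}}$ yields $\mu_{\max}\le\beta_{\max}$. The main obstacle, as in Theorem \ref{Th.TheoremExist}, will be the careful bookkeeping of the scalar-part operator $\mathbf{Sc}[\cdot]$ and the two-sided chirp factors arising from the QFT/QLCT correspondence in Lemma in Section~\ref{sec.QLCT}: one must order the quaternionic products so that the Schwarz estimate
$|\mathbf{Sc}\langle \bm{\tilde{s}},\F(\bm{\tilde{f}})\rangle|\le\|\bm{\tilde{s}}\|\,\|\F(\bm{\tilde{f}})\|$
applies without being spoiled by non-commutativity, and so that the equality cases propagate correctly through the chain to produce a genuine eigenvalue identity rather than merely a proportionality up to a quaternionic factor.
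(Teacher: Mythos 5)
Your proposal is correct and follows essentially the same route as the paper: the paper itself offers no written proof of this corollary, remarking only that it follows from the symmetry of the Fourier theory and that ``the prove of this conclusion is similar to Theorem \ref{Th.TheoremExist}'', together with the observation that Eq.~(\ref{Eq.Fptauf}) reduces to Eq.~(\ref{Eq.Exist}) under the change of variables $u=\frac{\sigma s}{\tau}$, $v=\frac{\sigma t}{\tau}$. Your sketch simply carries out that intended dual argument --- swapping the roles of $\bm{\tau}$ and $\tilde{\bm{\sigma}}$, building the auxiliary function $\bm{\tilde{S}}$, and running the same two Schwarz estimates with the ${\bf Sc}[\cdot]$ bookkeeping --- so it matches the paper's approach in all essentials.
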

The Eq. (\ref{Eq.Fptauf})  is equivalent to Eq. (\ref{Eq.Exist}) with $u=\frac{\sigma s}{\tau}$ and $v=\frac{\sigma t}{\tau}$.

%%%%%%%%%%%%%%%%%%%%%%%%%%%%%%section555555555555%%%%%%%%%%%%%%%%%%%%%%%%%%%%%%%%%%%%%%%%%%%%%%%%%%
\subsection{Extremal Properties}
\label{S5.2}
In this section, we will discuss the relationship of $(\alpha_{\bm{f}}, \beta_{\bm{f}})$ in Eq. (\ref{Eq.extremal}) from three cases:
\begin{itemize}
  \item [(1)]  $\bm{f}(x,y)$ is a $\bm{\sigma}$-bandlimited signal  associated with QLCTs.
  \item [(2)]  $\bm{f}(x,y)$ is a $\bm{\tau}$-time-limited signal.
  \item [(3)]  $\bm{f}(x,y)$ is an arbitrary signal.
\end{itemize}

The first case follows form the general theory of the $\bm{f}\in \mathscr{B}_{\bm{\sigma}}$ in Section \ref{S5.1}.
As we have known $\bm{\tilde{f}}$ is in $\tilde{\mathscr{B}}_{\tilde{\bm{\sigma}}}$
when $\bm{f}\in \mathscr{B}_{\bm{\sigma}}$, i.e., $\beta_{\bm{f}}=1$.
From Theorem \ref{Th.TheoremExist},
we know that the maximum $\alpha_{\bm{f}}$ equals the maximum eigenvalue $\mu_0$ in Eq. (\ref{Eq.Exist}).
Using the expansion for the $\bm{\tilde{f}} \in \tilde{\mathscr{B}}_{\tilde{\bm{\sigma}}}$,
$
\bm{\tilde{f}}(x,y)=\Sigma_{n=0}^{\infty}a_n\tilde{\bm{\psi}}_{n}(x,y),
$
where $a_n:=\int_{\R^2}\bm{\tilde{f}}(x,y)\overline{\bm{\psi}_{n}(x,y)}dxdy$. It is clear that
$
\alpha_{\bm{f}}=\int_{\bm{\tau}}\bm{\tilde{f}}(x,y)\overline{\bm{\tilde{f}}(x,y)}dxdy
=\Sigma_{n=0}^{\infty}\mu_n a_n^2\leq\mu_0\Sigma_{n=0}^{\infty}a_n^2=\mu _0.
$
Hence, $\alpha_{\bm{f}}\leq\mu_0$. If  $\alpha_{\bm{f}}=\mu _0$, then $\bm{\tilde{f}}(x,y)=\bm{\tilde{\psi}}_{0}(x,y)$.
If $\alpha_{\bm{f}}<\mu_0$, then we can find a signal $\bm{f}\in \mathscr{B}_{\bm{\sigma}}$  whose energy ratio in spatial domain equals
$\alpha_{\bm{f}}$, and in this case, $\bm{\tilde{f}}(x,y)$ is not unique.

The second case means $\alpha_{\bm{f}}=1$.
From the property of symmetry of the QLCT we conclude that all the properties for signals $\bm{f}\in \mathscr{B}_{\bm{\sigma}}$ have
corresponding time-limited counterparts. Reversing $(x,y)$ and $(u,v)$, we conclude that $\beta_{\bm{f}}\leq\mu_0$.
Specially, if $\beta_{\bm{f}}=\mu _0$, then $\bm{\tilde{f}}(x,y)=\frac{p_{\bm{\tau}}(x,y)\bm{\tilde{\psi}}_{0}(x,y)}{\sqrt{\mu_0}}$.

For the third case, considering arbitrary signals with $\alpha_{\bm{f}}<1$,
we aim to find the maximum $\beta_{\bm{f}}$ and the corresponding signal $\bm{f}(x,y)$.
If $\alpha_{\bm{f}}\leq\mu_0$, as we noted in the case of $\bm{f}\in \mathscr{B}_{\bm{\sigma}}$,
we can find $\tilde{\bm{f}}\in \tilde{\mathscr{B}}_{\tilde{\bm{\sigma}}}$  with energy ration $\alpha_{\bm{f}}$, hence, $\beta_{max}=1$.
Therefore, we only need to consider the  case of $\alpha_{\bm{f}}>\mu_0$.

\begin{Theorem}\label{Th.maxbeta}
The maximum $\beta_{max}$ of $\beta_{\bm{f}}$  must satisfy the following equation
\begin{eqnarray}\label{Eq.maxbeta}
\arccos\sqrt{\beta_{\bm{f}}}+\arccos\sqrt{\alpha_{\bm{f}}}=\arccos\sqrt{\mu_0},
\end{eqnarray}
where $\mu_0$ is the largest eigenvalues of Eq. (\ref{Eq.lowpass})
and the corresponding $\bm{\tilde{f}}$ for the  maximum $\beta_{max}$  is given by
\begin{eqnarray}
\bm{\tilde{f}}(x,y)=\sqrt{\frac{1-\alpha_{\bm{f}}}{1-\mu_0}}p_{\bm{\tau}}(x,y)\bm{\tilde{\psi}}_{0}(x,y)+
\left(\sqrt{\frac{\alpha_{\bm{f}}}{\mu_0}}-\alpha_{\bm{f}}\right)\bm{\tilde{\psi}}_{0}(x,y).
\end{eqnarray}
\end{Theorem}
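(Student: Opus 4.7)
The plan is to interpret $\sqrt{\alpha_{\bm{f}}}$ and $\sqrt{\beta_{\bm{f}}}$ as cosines of angles between $\tilde{\bm{f}}$ and two distinguished subspaces, and to extract the claimed equation from a spherical triangle inequality. Let $D$ denote multiplication by $p_{\bm{\tau}}$, i.e., the orthogonal projection onto the time-limited subspace $V_D$, and $B$ the orthogonal projection onto the QFT band-limited subspace $V_B:=\tilde{\mathscr{B}}_{\tilde{\bm{\sigma}}}$. The Parseval identity Eq.~(\ref{Eq.ParsevalQFT}) lets us rewrite Eq.~(\ref{Eq.extremal}) as
\begin{eqnarray*}
\sqrt{\alpha_{\bm{f}}}=\|D\tilde{\bm{f}}\|/\|\tilde{\bm{f}}\|, \qquad \sqrt{\beta_{\bm{f}}}=\|B\tilde{\bm{f}}\|/\|\tilde{\bm{f}}\|,
\end{eqnarray*}
and Theorem~\ref{Th.TheoremExist} identifies $\sqrt{\mu_0}$ with the operator norm of $D|_{V_B}$, i.e., the cosine of the smallest canonical angle between $V_D$ and $V_B$.

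The first step is the inequality $\arccos\sqrt{\alpha_{\bm{f}}}+\arccos\sqrt{\beta_{\bm{f}}}\geq\arccos\sqrt{\mu_0}$. After normalising $\tilde{\bm{f}}$, I introduce the unit vectors $\bm{u}:=D\tilde{\bm{f}}/\|D\tilde{\bm{f}}\|\in V_D$ and $\bm{v}:=B\tilde{\bm{f}}/\|B\tilde{\bm{f}}\|\in V_B$. Since $D$ is a self-adjoint idempotent, and since passing to the QFT side gives ${\bf Sc}\langle\tilde{\bm{f}},B\tilde{\bm{f}}\rangle=\|B\tilde{\bm{f}}\|^2$, the angle formula Eq.~(\ref{Eq.angle}) produces $\arg(\tilde{\bm{f}},\bm{u})=\arccos\sqrt{\alpha_{\bm{f}}}$ and $\arg(\tilde{\bm{f}},\bm{v})=\arccos\sqrt{\beta_{\bm{f}}}$. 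Because $\bm{v}\in V_B$, Theorem~\ref{Th.TheoremExist} yields $\|D\bm{v}\|\leq\sqrt{\mu_0}$, so Cauchy--Schwarz gives $|\langle\bm{u},\bm{v}\rangle|=|\langle\bm{u},D\bm{v}\rangle|\leq\sqrt{\mu_0}$ and hence $\arg(\bm{u},\bm{v})\geq\arccos\sqrt{\mu_0}$. The scalar-part inner product ${\bf Sc}\langle\cdot,\cdot\rangle$ realifies $\mathcal{L}^2(\R^2;\H)$ as the ordinary real Hilbert space $L^2(\R^2;\R^4)$, so the classical spherical triangle inequality on its unit sphere supplies
\begin{eqnarray*}
\arg(\bm{u},\bm{v})\leq\arg(\tilde{\bm{f}},\bm{u})+\arg(\tilde{\bm{f}},\bm{v}),
\end{eqnarray*}
and chaining the two bounds closes the first step; expanding $\cos(\arccos\sqrt{\mu_0}-\arccos\sqrt{\alpha_{\bm{f}}})$ exhibits the envelope $\sqrt{\beta_{\bm{f}}}\leq\sqrt{\mu_0\alpha_{\bm{f}}}+\sqrt{(1-\mu_0)(1-\alpha_{\bm{f}})}$.

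The second step is to verify that the displayed function attains equality, thereby identifying the maximum. I adopt the ansatz $\tilde{\bm{f}}=a\,p_{\bm{\tau}}\tilde{\bm{\psi}}_0+b\,\tilde{\bm{\psi}}_0$ with real coefficients $a,b$. The low-pass form Eq.~(\ref{Eq.lowpass}) gives $B(p_{\bm{\tau}}\tilde{\bm{\psi}}_0)=\mu_0\tilde{\bm{\psi}}_0$, while Eqs.~(\ref{Eq.QFMtau}) and (\ref{Eq.QFMR2orth}) for $n=0$ supply the inner products needed to compute
\begin{eqnarray*}
\|D\tilde{\bm{f}}\|^2=(a+b)^2\mu_0,\quad \|B\tilde{\bm{f}}\|^2=(a\mu_0+b)^2,\quad \|\tilde{\bm{f}}\|^2=a^2\mu_0+2ab\mu_0+b^2.
\end{eqnarray*}
Fixing $\alpha_{\bm{f}}$ and imposing saturation of the cosine-addition bound for $\sqrt{\beta_{\bm{f}}}$ converts the problem into a $2\times 2$ system in $(a,b)$ whose solution reproduces the coefficients in the statement. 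Substituting back and using $B(p_{\bm{\tau}}\tilde{\bm{\psi}}_0)=\mu_0\tilde{\bm{\psi}}_0$ then confirms that $\tilde{\bm{f}}$, $\bm{u}$ and $\bm{v}$ lie on a common geodesic in $V_D\oplus V_B$, so both the triangle inequality and the operator-norm bound are simultaneously saturated.

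The hard part will be the clean justification of the spherical triangle inequality with the angle defined through ${\bf Sc}\langle\cdot,\cdot\rangle$: the non-commutativity of $\H$ rules out a direct copy of the complex-valued argument, and the most transparent remedy is to transport the spherical geometry via the isometry $(\mathcal{L}^2(\R^2;\H),{\bf Sc}\langle\cdot,\cdot\rangle)\cong L^2(\R^2;\R^4)$. A secondary bookkeeping obstacle arises in the extremal calculation because $\tilde{\bm{\psi}}_0$ and $p_{\bm{\tau}}\tilde{\bm{\psi}}_0$ are not orthogonal---their overlap equals $\mu_0$ by Eq.~(\ref{Eq.QFMtau})---so the cross terms in $\|\tilde{\bm{f}}\|^2$ must be handled carefully when solving the linear system for $(a,b)$.
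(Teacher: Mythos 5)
Your proposal is correct, but it reaches Eq.~(\ref{Eq.maxbeta}) by the Landau--Pollak geometric route rather than the paper's variational one, so the two are worth contrasting. The paper first invokes an ``orthogonality principle'' to argue that a candidate extremizer may be replaced by a combination $a\,p_{\bm{\tau}}\tilde{\bm{f}}+b\,\F^{-1}\big(p_{\tilde{\bm{\sigma}}}\F(\tilde{\bm{f}})\big)$ without decreasing either ratio, deduces from the resulting fixed-point equation that $\F^{-1}\big(p_{\tilde{\bm{\sigma}}}\F(\tilde{\bm{f}})\big)$ is a QPSWF with eigenvalue $(1-a)(1-b)/(ab)$, and only then carries out the same two-dimensional computation and trigonometric substitution you use in your second step (the identity $\sqrt{\beta_{\bm{f}}}=\cos(\theta-\theta_1)$ appears there verbatim). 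What your route buys is a genuinely watertight proof of the bound $\arccos\sqrt{\alpha_{\bm{f}}}+\arccos\sqrt{\beta_{\bm{f}}}\geq\arccos\sqrt{\mu_0}$ for \emph{all} signals: the paper's reduction to the two-parameter family is heuristic (replacing $\tilde{\bm{f}}$ by $\tilde{\bm{f}}_1$ improves $\alpha$ and $\beta$ simultaneously, which does not by itself show that the maximizer of $\beta$ at \emph{fixed} $\alpha$ lies in that family), whereas your chain $\arccos\sqrt{\mu_0}\leq\arg(\bm{u},\bm{v})\leq\arg(\tilde{\bm{f}},\bm{u})+\arg(\tilde{\bm{f}},\bm{v})$ needs only the operator-norm content of Theorem~\ref{Th.TheoremExist} and the triangle inequality for the geodesic metric on the unit sphere of $(\mathcal{L}^2(\R^2;\H),{\bf Sc}\langle\cdot,\cdot\rangle)$, which is legitimate precisely because that pairing is the ordinary real inner product of $L^2(\R^2;\R^4)$; what the paper's route buys in exchange is the explicit identification of the band-limited projection as a QPSWF without any appeal to spherical geometry. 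Two points to tidy up in your write-up: (i) the step ${\bf Sc}\langle\tilde{\bm{f}},B\tilde{\bm{f}}\rangle=\|B\tilde{\bm{f}}\|^2$ requires the Plancherel identity at the level of scalar parts of inner products, which the paper states only for norms in Eq.~(\ref{Eq.ParsevalQFT}) but which is contained in the computation inside the proof of Theorem~\ref{Th.PlancherelTheorem} (or follows by polarization); (ii) solving your $2\times 2$ system actually yields coefficient $\sqrt{(1-\alpha_{\bm{f}})/(1-\mu_0)}$ on $\tilde{\bm{\psi}}_0$ and $\sqrt{\alpha_{\bm{f}}/\mu_0}-\sqrt{(1-\alpha_{\bm{f}})/(1-\mu_0)}$ on $p_{\bm{\tau}}\tilde{\bm{\psi}}_0$, which agrees with the final display of the paper's own proof but \emph{not} with the coefficients printed in the theorem statement (whose term $\sqrt{\alpha_{\bm{f}}/\mu_0}-\alpha_{\bm{f}}$ and placement of $p_{\bm{\tau}}$ appear to be typographical errors), so you should not force your linear system to ``reproduce the coefficients in the statement.''
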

\begin{proof}
Before giving the proof to Eq. (\ref{Eq.maxbeta}), we first need to present the following fact.
Given a function $\bm{\tilde{f}}$ with spatial projection $p_{\bm{\tau}}\bm{\tilde{f}}$ and
frequency projection $p_{\tilde{\bm{\sigma}}}\F(\bm{\tilde{f}})$,
we construct a new function  as follows
\begin{eqnarray}
\bm{\tilde{f}}_1(x,y):=ap_{\bm{\tau}}\bm{\tilde{f}}(x,y)+b\F^{-1}\Big(p_{\tilde{\bm{\sigma}}}\F(\bm{\tilde{f}})\Big)(x,y),
\end{eqnarray}
where $a$ and $b$ are two constants such that the energy of $\bm{g}(x,y)$ is minimum, where
\begin{eqnarray}
\bm{g}(x,y):=\bm{\tilde{f}}(x,y)-\tilde{\bm{f}}_{1}(x,y).
\end{eqnarray}
Denote  $\alpha_{\bm{f}}$, $\beta_{\bm{f}}$ and $\alpha_{\bm{f}_1}$, $\beta_{\bm{f}_1}$ the energy ratios
for $\bm{\tilde{f}}(x,y)$ and $\tilde{\bm{f}}_{1}(x,y)$ as Eq. (\ref{Eq.extremal}), respectively.
We conclude that $\alpha_{\bm{f}_1}\geq\alpha_{\bm{f}}$, $\beta_{\bm{f}_1}\geq\beta_{\bm{f}}$.

Suppose the energy of $\bm{\tilde{f}}(x,y)$ equals to $1$ and we rewrite  $\alpha_{\bm{f}},~\beta_{\bm{f}}$ as follows
\begin{eqnarray}
\begin{array}{ll}
\alpha_{\bm{f}}~=~\Big\langle p_{\bm{\tau}}\bm{\tilde{f}}, p_{\bm{\tau}}\bm{\tilde{f}}\Big\rangle,\\
\beta_{\bm{f}}~=~\Big\langle p_{\tilde{\bm{\sigma}}}\F(\bm{\tilde{f}}), p_{\tilde{\bm{\sigma}}}\F(\bm{\tilde{f}})\Big\rangle.
\end{array}
\end{eqnarray}
From the orthogonality principle \cite{P1977}, it follows that
\begin{eqnarray}
\Big\langle p_{\bm{\tau}}\bm{\tilde{f}}, \bm{g}\Big\rangle=0,~~~\textrm{and}~~~
\Big\langle \F^{-1}\Big(p_{\tilde{\bm{\sigma}}}\F(\bm{\tilde{f}})\Big), \bm{g}\Big\rangle=0,
\end{eqnarray}
which means $\Big\langle \tilde{\bm{f}}_{1}, \bm{g}\Big\rangle=0$.
Meanwhile, we have $E_{\bm{f}_1}$ of $\tilde{\bm{f}}_{1}$ by
\begin{eqnarray}
E_{\bm{f}_1}:=\Big\langle\tilde{\bm{f}}_{1},\tilde{\bm{f}}_{1} \Big\rangle
=1-E_{\bm{g}}.
\end{eqnarray}
Now we denote two energy for the  projection of $\bm{g}(x,y)$ as follows
\begin{eqnarray}
E_{p_{\bm{\tau}}\bm{g}}:=\Big\langle p_{\bm{\tau}}\bm{g},p_{\bm{\tau}}\bm{g}\Big\rangle,~~~\textrm{and}~~~
E_{p_{\tilde{\bm{\sigma}}}\F(\bm{g})}
:=\Big\langle p_{\tilde{\bm{\sigma}}}\F(\bm{g}),p_{\tilde{\bm{\sigma}}}\F(\bm{g})\Big\rangle.
\end{eqnarray}
The $E_{p_{\bm{\tau}}f}$ and $E_{p_{\tilde{\bm{\sigma}}}\F(\bm{f})}$ will be simply written
as $E_{\bm{\tau}}$ and $E_{\tilde{\bm{\sigma}}}$ in the following, respectively.
Since $\tilde{\bm{f}}_{1}(x,y)=\bm{\tilde{f}}(x,y)-\bm{g}(x,y)$,
we have
\begin{eqnarray}
\begin{array}{ll}
\Big\langle p_{\bm{\tau}}\tilde{\bm{f}}_{1},p_{\bm{\tau}}\tilde{\bm{f}}_{1}\Big\rangle
=\alpha_{\bm{f}_1}E_{\bm{f}_1}=\alpha_{\bm{f}}+E_{\bm{\tau}}, \\
\Big\langle p_{\tilde{\bm{\sigma}}}\F(\tilde{\bm{f}}_{1}),p_{\tilde{\bm{\sigma}}}\F(\tilde{\bm{f}}_{1})\Big\rangle
=\beta_{\bm{f}_1}E_{\bm{f}_1}=\beta_{\bm{f}}+E_{\tilde{\bm{\sigma}}}.
\end{array}
\end{eqnarray}
Therefore,  $\alpha_{\bm{f}_1}\geq\alpha_{\bm{f}}$ and $\beta_{\bm{f}_1}\geq\beta_{\bm{f}}$.
That means, in order to get the maximum $\beta_{\bm{f}}$, we can formula a function as follows
\begin{eqnarray}\label{Eq.combi}
\bm{\tilde{f}}=ap_{\bm{\tau}}\bm{\tilde{f}}+b\F^{-1}\Big(p_{\tilde{\bm{\sigma}}}\F(\bm{\tilde{f}})\Big).
\end{eqnarray}
Taking QFT to both sides for Eq. (\ref{Eq.combi}) and then taking frequency projection, we have
\begin{eqnarray}
\F\left(\bm{\tilde{f}}\right)p_{\tilde{\bm{\sigma}}}
=
a p_{\tilde{\bm{\sigma}}} \F\left(\bm{\tilde{f}}\right)*\left(\frac{\sin (\tau u)}{\pi u}\frac{\sin (\tau v)}{\pi v}\right)
+bp_{\tilde{\bm{\sigma}}}\F(\bm{\tilde{f}}).
\end{eqnarray}
Rearranging this formula, we obtain that
\begin{eqnarray}
(1-b)\F\left(\bm{\tilde{f}}\right)p_{\tilde{\bm{\sigma}}}=
a p_{\tilde{\bm{\sigma}}}\left[\F\Big(\bm{\tilde{f}}\Big)*\Big(\frac{\sin (\tau u)}{\pi u}\frac{\sin (\tau v)}{\pi v}\Big)\right].
\end{eqnarray}
Taking inverse QFT to the above equation, we have
\begin{eqnarray}\label{eq.inversepafts5}
&&\frac{1-b}{a}\F^{-1}\left(\F\left(\bm{\tilde{f}}\right)p_{\tilde{\bm{\sigma}}}\right)\\ \nonumber
&=&\F^{-1}\left(\F\left(\bm{\tilde{f}}\right)*\left(\frac{\sin (\tau u)}{\pi u}\frac{\sin (\tau v)}{\pi v}\right)\right)
*\left(\frac{\sin (\sigma x)}{\pi x}\frac{\sin (\sigma y)}{\pi y}\right).
\end{eqnarray}
On the other hand, taking the spatial projection to Eq. (\ref{Eq.combi}), we get
 \begin{eqnarray}
p_{\bm{\tau}}(x,y)\bm{\tilde{f}}(x,y)
=ap_{\bm{\tau}}(x,y)\bm{\tilde{f}}(x,y)+b\F^{-1}\left(p_{\tilde{\bm{\sigma}}}\F(\bm{\tilde{f}})\right)(x,y)p_{\bm{\tau}}(x,y).
\end{eqnarray}
Rearranging this equation, it becomes
 \begin{eqnarray}\label{eq.fandF5}
(1-a)p_{\bm{\tau}}(x,y)\bm{\tilde{f}}(x,y)=bp_{\bm{\tau}}(x,y) \F^{-1}\left(p_{\tilde{\bm{\sigma}}}\F(\bm{\tilde{f}})\right)(x,y).
\end{eqnarray}
Taking QFT on both sides to the above equation, it follows that
 \begin{eqnarray}\label{eq.pafts5}
(1-a)\F\left(\bm{\tilde{f}}\right)*\left(\frac{\sin (\tau u)}{\pi u}\frac{\sin (\tau v)}{\pi v}\right)
=b\left(p_{\tilde{\bm{\sigma}}}\F(\bm{\tilde{f}})\right)
*\left(\frac{\sin (\tau u)}{\pi u}\frac{\sin (\tau v)}{\pi v}\right).
\end{eqnarray}
Applying Eq. (\ref{eq.inversepafts5}) and Eq. (\ref{eq.pafts5}), we have
 \begin{eqnarray*}
&&\frac{1-b}{a}\mathcal{F}^{-1}\left(\mathcal{F}\left(\bm{\tilde{f}}\right)p_{\tilde{\bm{\sigma}}}\right)\\
&=&
\F^{-1}\left(\F\left(\bm{\tilde{f}}\right)*\left(\frac{\sin (\tau u)}{\pi u}\frac{\sin (\tau v)}{\pi v}\right)\right)
*\left(\frac{\sin (\sigma x)}{\pi x}\frac{\sin (\sigma y)}{\pi y}\right)\\
&=& \frac{b}{1-a} \F^{-1}\left(\left(p_{\tilde{\bm{\sigma}}}\F(\bm{\tilde{f}})\right)
*\left(\frac{\sin (\tau u)}{\pi u}\frac{\sin (\tau v)}{\pi v}\right)\right)
*\left(\frac{\sin (\sigma x)}{\pi x}\frac{\sin (\sigma y)}{\pi y}\right)\\
&=&\frac{b}{1-a} \F^{-1}\left(p_{\tilde{\bm{\sigma}}}\F(\bm{\tilde{f}})\right)
p_{\bm{\tau}}(x,y)*\left(\frac{\sin (\sigma x)}{\pi x}\frac{\sin (\sigma y)}{\pi y}\right) .
\end{eqnarray*}
Simplifying the above equality, we obtain that
\begin{eqnarray*}
\frac{(1-a)(1-b)}{ab}\F^{-1}\left(\F\left(\bm{\tilde{f}}\right)p_{\tilde{\bm{\sigma}}}\right)
=p_{\bm{\tau}}(x,y)
\F^{-1}\left(\F\left(\bm{\tilde{f}}\right)p_{\tilde{\bm{\sigma}}}\right)
*\left(\frac{\sin \sigma x}{\pi x}\frac{\sin \sigma y}{\pi y}\right).
\end{eqnarray*}
From above equality, we find that
$\F^{-1}\Big(\F\left(\bm{\tilde{f}}\right)p_{\tilde{\bm{\sigma}}}\Big)$
is one of QPSWFs for Eq. (\ref{Eq.lowpass}) and the corresponding eigenvalue is $\frac{(1-a)(1-b)}{ab}$.
By the relationship between $\bm{\tilde{f}}$ and
$\F^{-1}\Big(\F\left(\bm{\tilde{f}}\right)p_{\tilde{\bm{\sigma}}}\Big)$ in Eq. (\ref{eq.fandF5}),
we conclude that   $\bm{\tilde{f}}$  in Eq. (\ref{Eq.combi}) can be rewritten as
\begin{eqnarray}
\bm{\tilde{f}}(x,y)=A\bm{\tilde{\psi}}(x,y)+Bp_{\bm{\tau}}(x,y)\bm{\tilde{\psi}}(x,y).
\end{eqnarray}
Now, we compute the inner product of the above equation with $\bm{\tilde{f}}$ and $p_{\bm{\tau}}\bm{\tilde{f}}$ respectively.
Since $E_{\bm{f}}=1$ for $\bm{\tilde{f}}$,  we have
\begin{eqnarray}
1&=& A^2+ \mu B^2+2AB\mu,\\ \nonumber
\alpha_{\bm{f}}&=&(A+B)^2\mu.
\end{eqnarray}
Then we have  $A=\sqrt{\frac{1-\alpha_{\bm{f}}}{1-\mu}}$ and $B=\sqrt{\frac{\alpha_{\bm{f}}}{\mu}}-\sqrt{\frac{1-\alpha_{\bm{f}}}{1-\mu}}$.
It follows that
\begin{eqnarray}
\beta_{\bm{f}}=\langle p_{\tilde{\bm{\sigma}}}\F(\bm{\tilde{\psi})}, p_{\tilde{\bm{\sigma}}}\F
(\bm{\tilde{\psi}})\rangle
=(A+B\mu)^2.
\end{eqnarray}
With $\sqrt{\alpha_{\bm{f}}}=\cos\theta$ and $\sqrt{\mu}=\cos\theta_1$, the parameters become
$A=\frac{\sin\theta_1}{\sin\theta}$ and $B=\frac{\cos\theta_1}{\cos\theta}-\frac{\sin\theta_1}{\sin\theta}$.
That means
\begin{eqnarray}
\sqrt{\beta_{\bm{f}}}=\frac{\sin\theta}{\sin\theta_1}+\Big(\frac{\cos\theta}{\cos\theta_1}-\frac{\sin\theta}{\sin\theta_1}\Big)\cos^2\theta_1
=\cos(\theta-\theta_1),
\end{eqnarray}
from which it follows that
\begin{eqnarray}
\arccos\sqrt{\beta_{\bm{f}}}+\arccos\sqrt{\alpha_{\bm{f}}}=\arccos\sqrt{\mu}.
\end{eqnarray}
In order to get the maximal $\beta_{\bm{f}}$, we must take the largest $\mu=\mu_0$.
The corresponding function is
\begin{eqnarray}
\bm{\tilde{f}}(x,y)=\sqrt{\frac{1-\alpha_{\bm{f}}}{1-\mu_0}}\bm{\tilde{\psi}}_{0}(x,y)
+ (\sqrt{\frac{\alpha_{\bm{f}}}{\mu_0}}-\sqrt{\frac{1-\alpha_{\bm{f}}}{1-\mu_0}})p_{\bm{\tau}}\bm{\tilde{\psi}}_{0}(x,y).
\end{eqnarray}
The proof is complete.
\end{proof}

Until now we have discussed all the relationships of $(\alpha_{\bm{f}}, \beta_{\bm{f}})$, as well as
the signals to reach the maximum value of  $\beta_{\bm{f}}$ for different conditions of $\alpha_{\bm{f}}$.

%%%%%%%%%%%%%%%%%%%%%%%%%%%%%exampleexampleexample%%%%%%%%%%%%%%%%%%%%%%%%%%%%%%%%%%%%%%%%%%%%%%%%
\begin{figure}[!h]
  \centering
    \includegraphics[width=15cm]{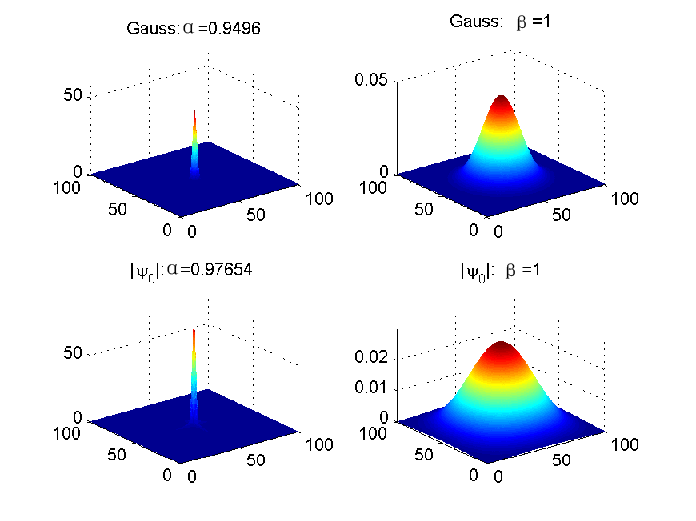}
  \caption{$\bm{\sigma}$-bandlimited $\bm{g}(x,y)$  associated with QFT and the modulus of $\bm{\psi}_{0}(x,y)$ in  time and QFT-frequency domains. }
 \label{fig.beta1}
\end{figure}

\begin{figure}[!h]
  \centering
    \includegraphics[width=15cm]{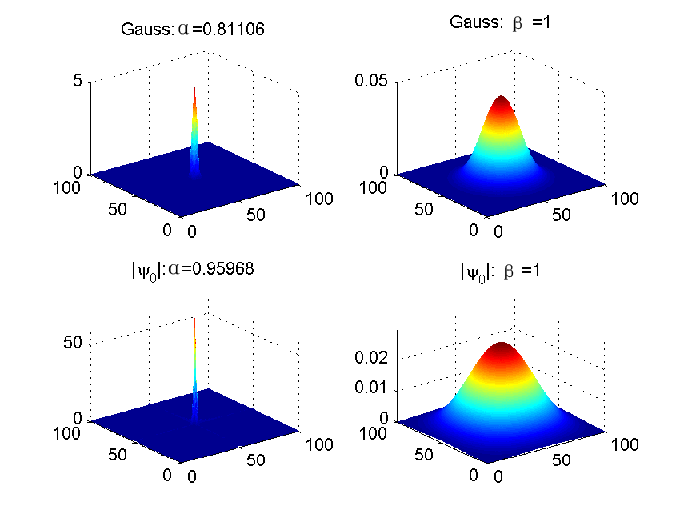}
  \caption{$\bm{\sigma}$-bandlimited $\bm{g}(x,y)$   associated with QLCTs and the modulus of $\psi_{0}(x,y)$ in  time and QLCT-frequency domains with
  $a_1=a_2=0,~b_1=b_2=1,~c_1=c_2=-1,~d_1=d_2=0.1$.  }
 \label{fig.beta2}
\end{figure}

\begin{Exam}
Now we give some comparison examples to intuitively illustrate the concentration levels of QPSWFs  associated with QLCTs.
The widely used Gaussian function will be compared with QPSWFs.
In Theorem \ref{Th.TheoremExist}, we have shown that QPSWFs are the most energy concentred $\bm{\sigma}$-bandlimited signals.

Now, a $\bm{\sigma}$-bandlimited Gaussian function is constructed at first.
Consider the truncated Gaussian function $\bm{g}(x,y)$  in QLCTs-frequency domain as follows
\begin{eqnarray}
G(u,v)=\frac{p_{\bm{\sigma}}(u,v) e^{-(u^2+v^2)}}{\parallel p_{\bm{\sigma}}e^{-(u^2+v^2)} \parallel},
\end{eqnarray}
where $G(u,v)$ is the QLCT of  $\bm{g}(x,y)$.
Obviously, $G(u,v)$ has unit energy.
This $\bm{\sigma}$-bandlimited Gaussian function $\bm{g}(x,y)$ in spatial domain becomes
\begin{eqnarray}
\bm{g}(x,y)=\frac{1}{\parallel p_{\bm{\sigma}}e^{-(u^2+v^2)} \parallel} \L^{-1}\Big( p_{\bm{\sigma}}(u,v) e^{-(u^2+v^2)}\Big).
\end{eqnarray}
As for the QPSWFs, by means of the classical one-dimensional PSWFs of zero order we now construct a special QPSWF as follows
\begin{eqnarray}
\bm{\psi}_{0}(x,y)=\frac{\varphi_0(x)\varphi_0(y)}{\parallel \varphi_0(x)\varphi_0(y) \parallel},
\end{eqnarray}
where $\varphi_0$ is the first one-dimensional zero order PSWF.
Here, we construct the QPSWF under the condition of $c=1$.
The QLCTs for the QPSWF becomes
\begin{eqnarray}
\L\left(\bm{\psi}_{0}\right)(u,v)=\frac{1}{\parallel \varphi_0(x)\varphi_0(y) \parallel}   \L(\varphi_0(x)\varphi_0(y)).
\end{eqnarray}
For both of the $\bm{\sigma}$-bandlimited signals above, the energy ratios $\beta$ equal to $1$ in QLCT-frequency domain.
The energy ratio pair in spatial and frequency in the comparison is noted as $(\alpha, \beta):=(\alpha_{\bm{f}},\beta_{\bm{f}})$.

\begin{figure}[!h]
  \centering
    \includegraphics[width=15cm]{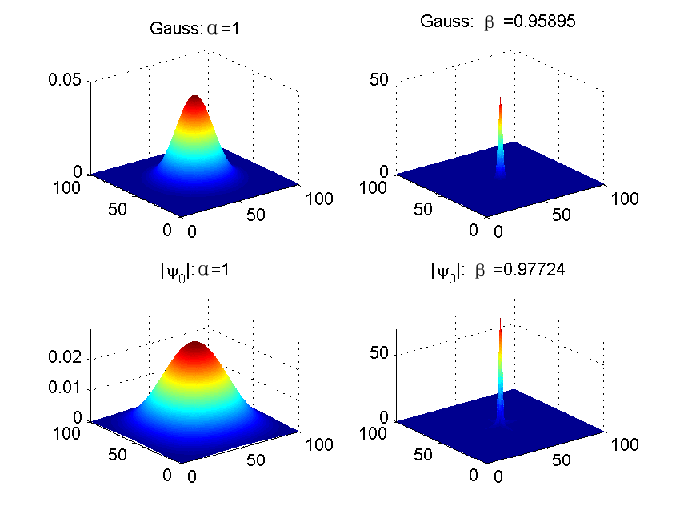}
  \caption{$\bm{\tau}$-time-limited $\bm{g}(x,y)$  associated with QFT and the modulus of $\bm{\psi}_{0}(x,y)$ in  time and QFT-frequency domains. }
 \label{fig.xi1}
\end{figure}
\begin{figure}[!h]
  \centering
    \includegraphics[width=15cm]{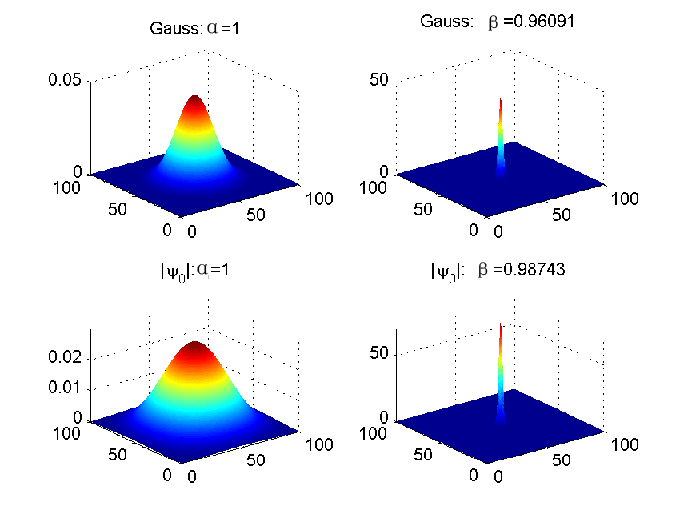}
  \caption{$\bm{\sigma}$-time-limited $\bm{g}(x,y)$  associated with QLCT and the modulus of $\bm{\psi}_{0}(x,y)$ in  time and QLCT-frequency domains with
  $a_1=a_2=0.3,~b_1=b_2=1,~c_1=c_2=-1,~d_1=d_2=0$.  }
 \label{fig.xi2}
\end{figure}

In Fig. \ref{fig.beta1} and Fig. \ref{fig.beta2},
we will show two pairs of the energy ratios $\alpha$ for $\bm{g}(x,y)$ and $\bm{\psi}_{0}(x,y)$ in spatial domain
 associated with QLCT with two kinds of different parameter matrices.
In Fig. \ref{fig.beta1} we set the parameter matrices of QLCT
$A_i=\left(\begin{array}{cc} 0& 1\\-1&0\end{array}\right)$, $i=1,2$,
which is already a QFT. In this case, the energy ratios $\alpha$ for $\bm{g}(x,y)$ and $\bm{\psi}_{0}(x,y)$  are very close.
However, in Fig. \ref{fig.beta2} we set the parameter matrices of QLCT
$A_i=\left(\begin{array}{cc} 0.3& 1\\-1&0\end{array}\right)$, $i=1,2$.
In this case, the energy ratio  $\alpha$ for $\bm{g}(x,y)$ is $0.81106$ and $\alpha$ for $\bm{\psi}_{0}(x,y)$  is $0.95968$.
In fact, we just change the parameters $a_i$, $i=1,2$ from $0$ to $0.3$.
That means, for QPSWFs the energy is more concentred then truncated  Gaussian function.

As for the $\bm{\tau}$ time-limited function, there are  the similar results like  $\bm{\sigma}$-bandlimited cases.
We also list two pairs of the energy ratios $\beta$ for $\bm{g}(x,y)$ and $\bm{\psi}_{0}(x,y)$
in QLCT-frequency domains  in Fig. \ref{fig.xi1} and Fig. \ref{fig.xi2}.
In Fig. \ref{fig.xi1} we also set the parameter matrices of QLCT to be the QFT.
The parameter matrices of QLCT in  Fig. \ref{fig.xi2} is the same as that in  Fig. \ref{fig.beta2}.
In this two pair cases, you may see the energy ratios $\beta$ for $\bm{g}(x,y)$ and $\bm{\psi}_{0}(x,y)$  are very close.
But one more thing different from Fig. \ref{fig.beta1} and Fig. \ref{fig.beta2} is that
 the energy ratios $\beta$ for $\bm{g}(x,y)$ and $\bm{\psi}_{0}(x,y)$  associated with QFT is smaller than
 the energy ratios $\beta$ for $\bm{g}(x,y)$ and $\bm{\psi}_{0}(x,y)$  associated with the second  parameter matrices.
 That means, the parameter matrices of QLCT is vary important.
 In some sense, for specific conditions the results for QLCT will be better than QFT.
\end{Exam}

%%%%%%%%%%%%%%%%%%%%%%%%%%%%%section666666666666%%%%%%%%%%%%%%%%%%%%%%%%%%%%%%%%%%%%%%%%%%%%%%%%%%
\section{Conclusion}
\label{S6}

This paper presented a new generalization of PSWFs, namely QPSWFs, which are the optimal
$\H$-valued signals for the energy concentration problem associated with the QLCTs.
We developed the definition of the QPSWFs  associated with QLCTs and established various properties of them.
In order to find the energy distribution of $(\alpha_{\bm{f}}, \beta_{\bm{f}})$ for any $\H$-valued signals,
we not only derive the Parseval identity  associated with (two-sided) QLCTs, but also show that the
maximum $\alpha_{\bm{f}}$ for $\bm{\sigma}$-bandlimited signals associated with QLCTs in a fixed spatial domain must be QPSWFs.

%%%%%%%%%%%%%%%%%%%%%%%%%%%%%section666666666666%%%%%%%%%%%%%%%%%%%%%%%%%%%%%%%%%%%%%%%%%%%%%%%%%%
\section*{Acknowledgments}
 The authors acknowledges financial support from the National Natural Science Foundation of China under Grant
 (No. 11401606,11501015), University of Macau (No. MYRG2015-00058-FST and No. MYRG099(Y1-L2)-FST13-KKI)
 and the Macao Science and Technology Development Fund (No. FDCT/094/2011/A and No. FDCT/099/2012/A3).

\section*{References}


\begin{thebibliography}{10}
\expandafter\ifx\csname url\endcsname\relax
  \def\url#1{\texttt{#1}}\fi
\expandafter\ifx\csname urlprefix\endcsname\relax\def\urlprefix{URL }\fi
\expandafter\ifx\csname href\endcsname\relax
  \def\href#1#2{#2} \def\path#1{#1}\fi

\bibitem{HL2011}%1
J.~A. Hogan and  J.~D. Lakey.
\textit{Duration and bandwidth limiting: Prolate Functions, Sampling, and Applications}.
Springer Science and Business Media Press, 2011.

\bibitem{P1977}%2
A.~Papoulis. \textit{Signal analysis}. McGraw-Hill Press, 1977.

\bibitem{F1957}%3
C.~Flammer.  \textit{Spheroidal Wave Functions}. Stanford University Press, 1957.


\bibitem{T1999}%4
W.~J. Thompson. \textit{Spheroidal wave functions},
Computing in Science and Engineering, 1(3), 84--87 (1999).

\bibitem{SP1961}%5
D.~Slepian and H.~O. Pollak. \textit{Prolate spheroidal wave functions, fourier analysis, and uncertainty--{I}},
Bell System Technical Journal, 40(1), 43--64 (1961).

\bibitem{LP1961}%6
H.~J. Landau and H.~O. Pollak. \textit{Prolate spheroidal wave functions, fourier analysis and uncertainty--{II}},
Bell System Technical Journal, 40(1), 65--84 (1961).

\bibitem{LP1962}%7
H. J. Landau and H.O. Pollak. \textit{Prolate spheroidal wave functions, fourier analysis and uncertainty--{III}:
The dimension of space of essentially time-and   bandlimited signals},
Bell System Technical Journal, 41(4), 1295--1336 (1962).

\bibitem{S1964}%8
D.~Slepian. \textit{Prolate spheroidal wave functions, fourier analysis and uncertainty--{IV}:
Extensions to many dimensions; generalized prolate spheroidal functions},
Bell System Technical Journal, 43(6), 3009--3057 (1964).

\bibitem{S1976}%9
D.~Slepian. \textit{On bandwidth}, in  Proceedings of the IEEE, 292--300 (1976).

\bibitem{K1992}%10
J.~Kondo. \textit{Integral equations},
Clarendon Press/Oxford University Press, 1992.

\bibitem{M2013}%11
Z.~S.~Michael. \textit{The classical theory of integral equations a concise treatment},
 New York: Birkh\"{a}user Press, 2012.

  \bibitem{DP2005}%12
J.~J. Ding and S.~C. Pei. \textit{Reducing sampling error by prolate spheroidal wave
  functions and fractional fourier transform},
  in Proceedings of the IEEE International Conference on Acoustics, Speech, and Signal Processing, 217--220 (2005).

 \bibitem{ZWSW2014}%13
H.~Zhao, R.~Wang, D.~Song, and D.~Wu. \textit{Maximally concentrated sequences in
  both time and linear canonical transform domains},
Signal, Image and Video Processing, 8(5), 819--829 (2014).

\bibitem{WS2003}%14
G.~Walter and X.~Shen. \textit{Sampling with prolate spheroidal wave functions},
Sampling Theory in Signal Image  Processing, 2,  25--52 (2003).

\bibitem{WS2004}%15
G.~Walter and X.~Shen. \textit{Wavelets based on prolate spheroidal wave functions},
Fourier Analysis and Applications, 10(1),  1--26 (2004).

\bibitem{LW1980}%16
H.~J. Landau and H.~Widom. \textit{Eigenvalue distribution of time and frequency limiting},
Mathematical Analysis and Applications, 77(2), 469--481 (1980).

\bibitem{TP1987}%17
N.~Tugbay and E.~Panayirci. \textit{Energy optimization of band-limited nyquist signals in the space domain},
IEEE Transactions on Communications, 35(4), 427--434 (1987).

\bibitem{MC2004}%18
I.~C. Moorea and M.~Cada. \textit{Prolate spheroidal wave functions, an introduction to the slepian series and its properties},
Applied and Computational Harmonic Analysis, 16(3), 208--230 (2004).

\bibitem{PD2005}%19
S.~Pei and J.~Ding. \textit{Generalized prolate spheroidal wave functions for optical
  finite fractional fourier and linear canonical transforms},
Optical Society of America A, 22(3), 460--474 (2005).

\bibitem{MZ2014}%20
T.~Moumni and A.~I. Zayed. \textit{A generalization of the prolate spheroidal wave
  functions with applications to sampling},
Integral Transforms and Special Functions, 1--15  (2014).

\bibitem{Z2014}%21
A.~I. Zayed. \textit{Generalized and fractional prolate spheroidal wave functions},
in Proceedings of the 10th International Conference on Sampling Theory and Applications, 268--270 (2014).

\bibitem{ZRMT2010}%22
H.~Zhao, Q.~Ran, J.~Ma, and L.~Tan. \textit{Generalized prolate spheroidal wave
  functions associated with linear canonical transform},
IEEE Transactions on Signal Processing, 58(6), 3032--3041 (2010).

\bibitem{WS2005}%23
G.~Walter and T.~Soleski. \textit{A new friendly method of computing prolate spheroidal wave functions and wavelets},
Applied and Computational Harmonic Analysis, 19(3), 432--443 (2005).

\bibitem{MKZ2013}%24
J.~Morais, K.~Kou, and Y.~Zhang. \textit{Generalized prolate spheroidal wave functions for
  offset linear canonical transform in clifford analysis},
Mathematical Methods in the Applied Sciences, 36(9), 1028--1041 (2013).

\bibitem{S1979}%25
A.~Sudbery. \textit{Quaternionic analysis},
Mathematical Proceedings of the Cambridge Philosophical Society, 85(2), 199--225 (1979).

\bibitem{H2014b}%26
E.~Hitzer. \textit{Two-sided clifford fourier transform with two square roots of 1 in
  cl$(p; q)$, advances in applied clifford algebras},
Advances in Applied Clifford Algebras, (2014).

\bibitem{EBW1987}%27
R.~Ernst, G.~Bodenhausen,  and A.~Wokaun. \textit{Principles of Nuclear magnetic resonance in one and two dimensions},
Oxford University Press, 1987.


\bibitem{CTM2007}%28
E. B. Corrochano, N. Trujillo, and M. Naranjo. \textit{Quaternion fourier descriptors for
  preprocessing and recognition of spoken words using images of spatiotemporal
  representations},
Mathematical Imaging and Vision, 28, 179--190 (2007).

\bibitem{BLC2003}%29
P. Bas, N. LeBihan, and J. M. Chassery. \textit{Color image water marking using quaternion
  fourier transform},
  in  Proceedings of the IEEE International Conference on
  Acoustics, Speech and Signal Processing, 521--524 (2003).

  \bibitem{CKL2015}%34
L.~Chen, K.~Kou, and M.~Liu. \textit{Pitt's inequatlity and the uncertainty
  principle associated with the quaternion fourier transform},
Mathematical Analysis and Applications, 423(1),  681--700 (2015).

\bibitem{BDS1982}%30
F. Brackx, R. Delanghe, and F. Sommen. \textit{Clifford Analysis},
London: Pitman Research Notes in Mathematics, 1982.

\bibitem{MGS2014}%31
J.~Morais,  S.~Georgiev, and W.~Sprosig. \textit{Real Quaternionic Calculus Handbook},
Birkh\"auser, Basel Press, 2014.

\bibitem{H2007}%32
E.~M.~Hitzer. \textit{Quaternion fourier transform on quaternion fields and generalizations},
Advances in Applied Clifford Algebras, 17(3), 497--517 (2007).

\bibitem{C1970} %33
S. A. Collins. \textit{Lens-system Diffraction Integral Written in Terms of Matrix Optics}, J. Opt. Soc. Am., 60 1168-1177 (1970).

\bibitem{MQ1971}%34
M.~Moshinsky and C.~Quesne. \textit{Linear cononical transforms and their unitary representations},
Mathematical Physics, 12, (1971).

\bibitem{KOM2016}%35
K.~I.~Kou, J. Ou and J. Morais. \textit{Uncertainty principles associated with quaternionic linear canonical transforms}, Mathematical Methods in the Applied Sciences, 39, 2722-2736 (2015).

\bibitem{XKM2016}%36
X. Fan, K.~I.~Kou and M. Liu. \textit{Quaternion wigner-ville distribution associated with the linear canonical transforms}, Preprint.

\bibitem{CK2016}%37
D.~Cheng, and K.~I.~Kou. \textit{Properties of quaternion Fourier transforms}, Preprint.

\bibitem{A2003}%38
V.~Anders. \textit{Fourier analysis and its applications},
 Springer Science and Business Media press, 2003.

 \bibitem{MRR2013}%39
M.~Bahri, R.~Ashio, and R.~Vaillancourt. \textit{Convolution therorems for quaternion Fourier transform: properties and applications},
Abstract and Applied Analysis, 2013,  1--10 (2013).


\end{thebibliography}
\end{document}